\documentclass[reqno]{amsart}
\usepackage{cite}
\usepackage[russian,english]{babel}
\usepackage{fixltx2e}
\usepackage{amssymb,amsmath,amsfonts,amsthm}
\usepackage[utf8]{inputenc}
\usepackage{cmap}
\usepackage[T1]{fontenc}
\usepackage[all]{xy}
\usepackage{mathbbol}
\usepackage{graphicx}
\usepackage{xcolor}
\usepackage{lineno}

\newcommand{\black}{\color{black}}

\newcommand{\bl}{\color{blue}{[}\black }
\newcommand{\br}{\color{blue}{]}\black }
\newcommand{\rl}{\color{red}{[}\black }
\newcommand{\rr}{\color{red}{]}\black }

\DeclareMathOperator{\End}{End}

\DeclareMathOperator{\Span}{Span}
\DeclareMathOperator{\Leib}{Leib}

\newtheorem{theorem}[subsection]{Theorem}
\newtheorem{proposition}[subsection]{Proposition}

\theoremstyle{definition}
\newtheorem{definition}[subsection]{Definition}

\theoremstyle{remark}

\tolerance=5000 \topmargin -0.5cm \oddsidemargin=0.7cm
\evensidemargin=0.7cm \textwidth 16cm \textheight 22.5cm
%%%%%%%%%%%%%%%%%%%%%%%%%%%%%%%%%%%%
\begin{document}
%\linenumbers

\title[Finite-dimensional Leibniz algebra representations of $\mathfrak{sl}_2$]{Finite-dimensional Leibniz algebra representations of $\mathfrak{sl}_2$}
\author{T. Kurbanbaev}
\address{[Tuuelbay Kurbanbaev] Institute of Mathematics of Uzbek Academy of Sciences, Mirzo Ulugbek 81, 100170 Tashkent, Uzbekistan.}
\email{tuuelbay@mail.ru}
\author{R. Turdibaev}
\address{[Rustam Turdibaev] Inha University in Tashkent, Ziyolilar 9, 100170 Tashkent, Uzbekistan.}
\email{r.turdibaev@inha.uz}

\thanks{The authors were supported by the project \"E$\Phi$A-$\Phi$\TeX-2018-79.}

\begin{abstract}
	All finite-dimensional Leibniz algebra bimodules of a Lie algebra $\mathfrak{sl}_2$ over a field of characteristic zero are described.
\end{abstract}
\subjclass[2010]{17A32}
\keywords{Leibniz algebra, Leibniz algebra bimodule}

\maketitle
\section{Introduction}

Finite-dimensional representations of a finite-dimensional semisimple Lie algebra is a well-studied beautiful classical theory. There is a Weyl's theorem on  complete reducibility that claims that any finite-dimensional module over a semisimple Lie algebra is a direct sum of simple modules. A textbook approach starts with the finite-dimensional representations of the simple Lie algebra $\mathfrak{sl}_2$. In this work we find all finite-dimensional representations of $\mathfrak{sl}_2$ in a larger category -- Leibniz algebra representations of $\mathfrak{sl}_2$.

The notion of a Leibniz algebra first appeared under the name of  D-algebra, introduced by A. Bloh in \cite{Bloh} as one of the generalizations of Lie algebras, in which multiplication by an element is a derivation. Later, they were discovered independently by J.-L. Loday \cite{LodayCyclic} and gain popularity under the name of Leibniz algebras. Given a Leibniz algebra $L$ there is a two-sided ideal $\Leib(L)=\Span\{ [x,x] \mid x\in L\}$, associated to it, also known as the Leibniz kernel by some authors. The canonical Lie algebra $L/\Leib(L)$ is called the liezation of $L$. Due to Leibniz kernel, there are no simple non-Lie Leibniz algebras. However, by abuse of standard terminology a simple Leibniz algebra is introduced in \cite{Dzumadil'daev} as an algebra with simple liezation and simple Leibniz kernel. All such algebras are described via irreducible representations of simple Lie algebras.

While originally defined differently (cf. \cite{Bloh3},\cite{LodayCyclic}), the representation of a Leibniz algebra is given in \cite{LP_Universal} as a $\mathbb{K}$-module $M$ with two actions - left and right, satisfying compatibility conditions coming from a so called square-zero construction. It is known that the category of Leibniz representations of a given Leibniz algebra is not semisimple and any non-Lie Leibniz algebra admits a representation, which is neither simple, nor completely reducible \cite[Proposition 1.2]{arxiv}. In \cite{LP_Leib_rep} the indecomposable objects of the category of Leibniz representations of a Lie algebra are studied and for  $\mathfrak{sl}_2$ the indecomposable objects in that category are described as extensions (see Theorem \ref{LP_Thm} below). Our goal in the current work is to build these extensions explicitly. Remarkably, the authors of \cite{LP_Leib_rep}  prove that for $\mathfrak{sl}_n$ ($n\geq 3$) the category of Leibniz representations is of wild type.

This work is a direct continuation of an investigation started in  \cite{arxiv}. If $M$ is an irreducible Leibniz representation of $\mathfrak{sl}_2$, by Weyl's result the left action on $M$ as a Lie algebra representation decomposes into a direct sum of irreducible Lie representations of $\mathfrak{sl}_2$. Hence, the problem of description reduces to the study of the right action. In case the number of such irreducible Lie representations is two, up to a Leibniz algebra representation isomorphism there are exactly two types of irreducible Leibniz representations, whose actions are described in \cite[Theorem 3.1]{arxiv}. In the current work, we establish the description in full generality in Theorem \ref{main_thm}.

All representations and algebras in this work are finite-dimensional over a field of characteristic zero.

%%%%%%%%%%%%%%%%%%%%%%%%%%%%%%%%%%%

\section{Preliminaries}

\begin{definition} 	An algebra $(L,[-,-])$ over a field $\mathbb{K}$ is called a (right) \emph{Leibniz algebra} if for all $x,y,z\in L$ the following identity holds:
 	$$[x,[y,z]]=[[x,y],z] - [[x,z],y].$$
\end{definition}

In case the bracket is skew-symmetric, the identity above, called Leibniz identity transforms into Jacobi identity. One can establish that category of Lie algebras constitute a full subcategory of category of Leibniz algebras.

Next we use definition from \cite{LP_Universal} to define  representation of a Leibniz algebra.
\begin{definition}
	A $\mathbb{K}$-vector space  $M$ with two bilinear maps $\bl-,-\br:L\times M \rightarrow M$ and $\rl-,-\rr:M\times L \rightarrow M$ is called \textit{representation} of Leibniz algebra $L$ if the following holds:
	\begin{align}
	\rl m,[x,y]\rr & =\rl\rl m,x\rr,y\rr-\rl \rl m,y\rr,x\rr,\label{red}\\
	\bl x,\rl m,y\rr\br & =\rl\bl x,m\br,y\rr-\bl [x,y],m\br,\label{bluered}\\
	\bl x,\bl y,m\br\br & =\bl[x,y],m\br-\rl\bl x,m\br,y\rr.\label{blueblue}
	\end{align}
\end{definition}

Note that, these are exactly the conditions for a direct sum of $\mathbb{K}$-vector spaces $L\oplus M$ to be Leibniz algebra, where $L$ and $M$ are contained as subalgebra and abelian ideal, correspondingly. Such construction is called square-zero construction. Adding identities $(\ref{bluered})$ and $(\ref{blueblue})$ we obtain
\begin{equation}
{\color{blue}[} x, {\color{red}[} m,y {\color{red}]} + {\color{blue}[} y,m
{\color{blue}]} {\color{blue}]} = 0\label{blue}
\end{equation}
which is often used instead of identity (\ref{blueblue}).

Given a representation $M$ of a Leibniz algebra $L$, one defines linear maps  $\lambda_x, \rho_x : M \to M$ by $\lambda_x(m)=\bl x,m \br$ and $\rho_x(m)=\rl m,x \rr$ for every $x\in L, \ m\in M$. Defining relations of Leibniz representation yield for all $x,y\in L$ the following:
\begin{align}
\rho_{[x,y]}=\rho_y\circ\rho_x-\rho_x\circ\rho_y, \label{rho} \\
\lambda_{[x,y]}=\rho_y\circ\lambda_x-\lambda_x\circ\rho_y, \label{lamb1}\\
\lambda_x\circ(\rho_y+\lambda_y)=0.  \label{rholambda1}		
\end{align}

A representation of a Leibniz algebra $L$ is called \textit{symmetric} (\textit{anti-symmetric}) if $\rho_x=-\lambda_x$ ( respectively, $\lambda_x=0$) for all $x\in L$. Considering a Lie algebra $\mathfrak{g}$ as a Leibniz algebra, equation (\ref{rho}) (equation (\ref{red}) for module argument) shows that the map $\rho:\mathfrak{g}\to \End(M)$ defined by $\rho(x)=\rho_x$ coincides with Lie algebra representation of the Lie algebra $\mathfrak{g}$. Moreover, it is known from \cite{LP_Universal} that the category of symmetric, as well as, the category of anti-symmetric representations of a given Leibniz algebra $L$ is equivalent to the category of Lie algebra representations of the liezation of $L$.

For the sake of convenience, throughout this work, for a Leibniz algebra $L$ we call  representation $M$ a \textit{bimodule} $M$, and a Lie algebra module $N$ an $L$-module $N$ or simply a \textit{module} $N$. Given a module $M$ over a Lie algebra $\mathfrak{g}$, one can introduce symmetric and antisymmetric Leibniz bimodules $M^s$ and $M^a$, by taking the left action to be negative of the right action for the first, and identically zero for the second bimodule, correspondingly.

A bimodule is called \textit{simple} or \textit{irreducible}, if it does not admit non-trivial subbimodules. It is well-known that the simple objects in the category of Leibniz representations of a given Leibniz algebra are exactly symmetric and anti-symmetric representations \cite[Lemma 1.9]{Barnes}.

 A bimodule is called \textit{indecomposable}, if it is not a direct sum of its subbimodules. Obviously, a simple bimodule is indecomposable, while the converse is not necessarily true (see \cite[Proposition 1.2]{arxiv} and a paragraph that follows). To study bimodules it suffices to study indecomposable ones. In case Leibniz algebra is a Lie algebra, we utilize the following Weyl's result on complete reducibility of the right action of the bimodule.

\begin{theorem}(\cite{Jacobson})\label{completelyreducible} If $\mathfrak{g}$ is a finite-dimensional semi-simple Lie algebra over a field of characteristic zero, then every finite-dimensional module over $\mathfrak{g}$ is completely reducible.
\end{theorem}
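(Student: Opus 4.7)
The plan is to prove Weyl's complete reducibility via the Casimir element, in three stages: construction of the Casimir, a codimension-one splitting lemma, and reduction of the general splitting problem to that lemma.

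First, I would construct the Casimir operator. Because $\mathfrak{g}$ is semisimple over a field of characteristic zero, the Killing form $\kappa$ is non-degenerate. Pick a basis $\{x_i\}$ of $\mathfrak{g}$ and its $\kappa$-dual basis $\{y_i\}$, and set $c_\rho = \sum_i \rho(x_i)\rho(y_i) \in \End(M)$ for any finite-dimensional representation $\rho: \mathfrak{g} \to \End(M)$. A direct check using the invariance of $\kappa$ shows $[\rho(x),c_\rho]=0$ for all $x\in\mathfrak{g}$, so $c_\rho$ is a $\mathfrak{g}$-module endomorphism. When $\rho$ is irreducible and non-trivial, Schur's lemma implies $c_\rho$ acts as a scalar, and since $\operatorname{tr}(c_\rho)=\dim\mathfrak{g}$ (computed in a suitable basis), that scalar is non-zero on any non-trivial irreducible summand. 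On the trivial module $c_\rho$ acts as zero since $\mathfrak{g}=[\mathfrak{g},\mathfrak{g}]$ kills it.

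Second, I would prove the special case: if $N\subseteq M$ is a submodule with $M/N$ the trivial one-dimensional module, then the sequence $0\to N\to M\to \mathbb{K}\to 0$ splits. I proceed by induction on $\dim N$. If $N$ is irreducible and non-trivial, the Casimir $c_M$ acts on $N$ as a non-zero scalar and on $M/N$ as zero, so $\ker(c_M)$ is a one-dimensional $\mathfrak{g}$-stable complement to $N$ in $M$. If $N$ is trivial, then $\mathfrak{g}$ acts trivially on $M$ and any linear complement works. If $N$ is reducible, pick a proper non-zero submodule $N'\subset N$; by induction applied to $0\to N/N'\to M/N'\to \mathbb{K}\to 0$ one obtains an intermediate module, and a second application of induction to $N'\subset\widetilde{M}$ produces the splitting.

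Third, I reduce the general case. Given $0\to N\to M\to V\to 0$, consider the $\mathfrak{g}$-module $\operatorname{Hom}_{\mathbb{K}}(V,M)$ with the standard action $(x\cdot f)(v)=x\cdot f(v)-f(x\cdot v)$. Let $\mathcal{M}\subseteq\operatorname{Hom}_{\mathbb{K}}(V,M)$ be the submodule of $f$ whose composition with the projection $\pi:M\to V$ is a scalar multiple of $\id_V$, and $\mathcal{N}\subseteq\mathcal{M}$ the submodule where that scalar is $0$ (equivalently $f(V)\subseteq N$). Then $\mathcal{M}/\mathcal{N}\cong\mathbb{K}$ as a trivial module, so the special case yields a $\mathfrak{g}$-equivariant $f:V\to M$ with $\pi\circ f=\id_V$; its image is a $\mathfrak{g}$-stable complement to $N$, proving $M=N\oplus f(V)$.

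The main obstacle is setting up the Casimir so that its scalar action is genuinely non-zero on non-trivial irreducibles; once that is in hand the inductive step and the $\operatorname{Hom}$-trick are straightforward. Apart from that, care must be taken to verify that $\mathcal{M}$ and $\mathcal{N}$ are indeed $\mathfrak{g}$-stable, which is a routine unravelling of the action on $\operatorname{Hom}_\mathbb{K}(V,M)$.
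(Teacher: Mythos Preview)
The paper does not prove this theorem at all; it is quoted as Weyl's classical complete reducibility theorem with a citation to Jacobson's textbook, and is used as a black box. So there is no ``paper's own proof'' to compare against --- your proposal supplies what the paper deliberately omits.

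Your outline is the standard Casimir-element proof (essentially the one in Humphreys, and close to Jacobson's cohomological treatment), and the strategy is sound. One point to tighten: you build the Casimir from the Killing form $\kappa$ and then assert $\operatorname{tr}(c_\rho)=\dim\mathfrak{g}$ on an arbitrary representation $\rho$. That identity holds when the dual bases are taken with respect to the trace form of $\rho$ itself (after reducing to a faithful representation of a semisimple quotient), not with respect to $\kappa$; with the $\kappa$-dual basis the trace is $\sum_i B_\rho(x_i,y_i)$, which need not equal $\dim\mathfrak{g}$. The fix is routine --- either switch to the trace form of the given faithful representation, or argue separately that the Killing-form Casimir is nonzero on nontrivial irreducibles --- but as written the nonvanishing of the scalar is not quite justified. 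The remaining two steps (the codimension-one lemma and the $\operatorname{Hom}$-space reduction) are correctly set up.
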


In order to describe all finite-dimensional indecomposable Leibniz bimodules of a Lie algebra $\mathfrak{sl}_2$ over a field of characteristic zero with basis $\{e,f,h\}$ and the products
\begin{center}
	$\begin{array}{lll}
	[e,f]=h, & [e,h]=2e, & [f,h]=-2f,
	\end{array}$
\end{center}
we use the following well-known description of simple $\mathfrak{sl}_2$-modules.
\begin{theorem}\label{irreducible}(\cite{Jacobson}) For every non-negative integer $m$ there exists up to an $\mathfrak{sl}_2$-module isomorphism one and only one irreducible $\mathfrak{sl}_2$-module $V(m)$ of dimension $m+1.$ The module $V(m)$ admits a basis $\{v_0,v_1,\dots,v_m\}$ in which the following holds for all $k=0, \dots, m$:
	\begin{center}
		$\begin{array}{l}
		\, \rl h,v_k\rr=(m-2k)v_k, \\
		\, \rl f,v_k\rr=v_{k+1},   \\
		\, \rl e,v_k\rr=-k(m+1-k)v_{k-1}.\\
		\end{array}$
	\end{center}

\end{theorem}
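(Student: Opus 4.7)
The plan is to invoke the standard highest-weight theory for $\mathfrak{sl}_2$, adapted to the right-action convention codified by (\ref{rho}). Passing to the algebraic closure costs nothing since the formulas in the statement have integer coefficients, so I assume $\mathbb{K}$ is algebraically closed and let $V$ be a finite-dimensional irreducible $\mathfrak{sl}_2$-module.

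First I would produce a highest weight vector. Finite-dimensionality of $V$ together with algebraic closure of $\mathbb{K}$ give an eigenvector of $\rho_h$, and from (\ref{rho}) applied to $[e,h]=2e$ one checks that $\rho_e$ shifts $\rho_h$-eigenvalues by $2$. Iterating $\rho_e$ must terminate, yielding a nonzero $v_0$ with $\rho_e(v_0)=0$ and $\rho_h(v_0)=\mu v_0$ for some $\mu\in\mathbb{K}$. Setting $v_k:=\rho_f^k(v_0)$, I would prove by induction on $k$ the two formulas
\begin{align*}
\rho_h(v_k)&=(\mu-2k)v_k,\\
\rho_e(v_k)&=-k(\mu+1-k)v_{k-1}.
\end{align*}
The inductive step for the second rests on the commutator identity $\rho_e\rho_f-\rho_f\rho_e=-\rho_h$, which comes from (\ref{rho}) applied to $[e,f]=h$; the minus sign reflects the fact that $\rho$ is an anti-homomorphism under the paper's convention.

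Next, the $v_k$ are eigenvectors of $\rho_h$ with pairwise distinct eigenvalues, hence linearly independent whenever nonzero. Finite-dimensionality of $V$ thus forces $v_{m+1}=0$ for some smallest $m\geq 0$. Applying $\rho_e$ to this relation and using the second formula above yields $-(m+1)(\mu-m)v_m=0$, so $\mu=m$. The subspace $\Span\{v_0,\dots,v_m\}$ is stable under $\rho_e$, $\rho_f$, $\rho_h$, hence is a nonzero submodule of $V$; irreducibility forces it to equal $V$, giving $\dim V=m+1$ with the action of the statement.

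For existence and uniqueness of $V(m)$, I would verify directly that the $(m+1)$-dimensional space equipped with the displayed formulas satisfies (\ref{rho}) for the brackets $[e,f]=h$, $[e,h]=2e$, $[f,h]=-2f$ --- a routine bookkeeping check. Uniqueness then follows since any two irreducible modules of dimension $m+1$ share the highest weight $m$, and the assignment of a highest weight vector to a highest weight vector extends along the basis $\{v_k\}$ to a module isomorphism. The main obstacle here is purely bookkeeping: the anti-homomorphism convention (\ref{rho}) subtly permutes a few signs compared with the standard Lie-theoretic presentation, and it is this flip that ultimately produces the minus sign in the formula $\rho_e(v_k)=-k(m+1-k)v_{k-1}$ given in the statement.
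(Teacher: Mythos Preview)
The paper does not prove this theorem at all: it is stated as a classical result and attributed to Jacobson via \cite{Jacobson}, with no argument given. Your highest-weight argument is correct and is precisely the standard textbook proof (essentially Jacobson's), carefully adapted to the paper's right-action convention $\rho_{[x,y]}=\rho_y\rho_x-\rho_x\rho_y$; in particular your sign tracking in $\rho_e\rho_f-\rho_f\rho_e=-\rho_h$ and the resulting formula $\rho_e(v_k)=-k(\mu+1-k)v_{k-1}$ are right, so there is nothing to compare against and nothing to fix.
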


J.-L. Loday and T. Pirashvili described the Gabriel quiver of Leibniz representations of $\mathfrak{sl}_2$ using Clebsch-Gordon formula in \cite{LP_Leib_rep},  and citing results of \cite{Gabriel} and \cite{Martinez} they found all indecomposable objects in the category of Leibniz representations of $\mathfrak{sl}_2$  as extensions of simple objects. For the sake of convenience, we express their result as the following

\begin{theorem}\cite{LP_Leib_rep}\label{LP_Thm}
	For every non-negative integers $n$ and $k\leq\lfloor n/2 \rfloor+1$ there are exactly two indecomposable $\mathfrak{sl}_2$-bimodules $M_1$ and $M_2$ determined uniquely by the following extensions:
$$0\longrightarrow \bigoplus_{0\leq i<\frac{k}{2}} V(n-4i-2)^a \longrightarrow M_1 \longrightarrow \bigoplus_{0\leq i\leq  \frac{k-1}{2}} V(n-4i)^s\longrightarrow 0,$$
$$0\longrightarrow \bigoplus_{0\leq i\leq  \frac{k-1}{2}} V(n-4i)^a\longrightarrow M_2 \longrightarrow \bigoplus_{0\leq i<\frac{k}{2}} V(n-4i-2)^s \longrightarrow 0,$$
where $V(d)^s$ and $V(d)^a$ are irreducible symmetric and antisymmetric Leibniz representations of $\mathfrak{sl}_2$, correspondingly and $V(0)^a=V(0)^s$ is a trivial one-dimensional representation.

\end{theorem}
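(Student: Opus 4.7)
The approach is to reduce the classification to the representation theory of the Gabriel quiver attached to the category of Leibniz $\mathfrak{sl}_2$-bimodules.

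First, by Theorem \ref{completelyreducible}, any finite-dimensional bimodule $M$ decomposes under the right action $\rho$ into simple $\mathfrak{sl}_2$-modules. The subspace $M^a := \bigcap_{x \in \mathfrak{sl}_2} \ker\lambda_x$ is, by construction, annihilated by $\lambda$, and identity (\ref{bluered}) ensures it is $\rho$-stable, so $M^a$ is an antisymmetric subbimodule. By (\ref{rholambda1}), the image of $\rho_y + \lambda_y$ lies inside $M^a$, so $\lambda \equiv -\rho$ on $M/M^a$, making the quotient a symmetric bimodule. A further application of Weyl decomposes both pieces, yielding a canonical short exact sequence
$$0 \longrightarrow \bigoplus V(m)^a \longrightarrow M \longrightarrow \bigoplus V(n)^s \longrightarrow 0,$$
so that indecomposable $M$ are controlled by indecomposable extensions of a direct sum of simple symmetric bimodules by a direct sum of simple antisymmetric bimodules.

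Second, I compute $\mathrm{Ext}^1$ between simple bimodules. After fixing a $\rho$-splitting, an extension on $V(n) \oplus V(m)$ is encoded by the map $\phi_x := \lambda_x + \rho_x : V(n) \to V(m)$, and the Leibniz axiom (\ref{bluered}) reduces to the cocycle relation
$$\phi_{[x,y]} = \rho_y \circ \phi_x - \phi_x \circ \rho_y.$$
Viewed as a linear map $V(2) \otimes V(n) \to V(m)$, this is precisely $\mathfrak{sl}_2$-equivariance, so by Clebsch--Gordan $V(2) \otimes V(n) \cong V(n+2)\oplus V(n) \oplus V(n-2)$ the cocycle space is at most one-dimensional and nonzero only for $m\in\{n-2, n, n+2\}$. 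Coboundaries coming from $T \in \mathrm{Hom}_{\mathfrak{sl}_2}(V(n), V(m))$ exactly cancel the $m=n$ contribution, so $\mathrm{Ext}^1(V(n)^s, V(m)^a)$ is one-dimensional if $|m-n|=2$ and zero otherwise; an analogous argument handles $\mathrm{Ext}^1(V(n)^a, V(m)^s)$, while Ext between two simples of the same symmetry type vanishes by Weyl.

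Third, these Ext dimensions determine the Gabriel quiver of the category, whose connected components are $A_\infty$-type strips with vertices alternating between symmetric and antisymmetric simples of consecutive weights, joined for even weights at $V(0)^s = V(0)^a$. By the classification of indecomposable finite-dimensional representations of such quivers due to Gabriel \cite{Gabriel} and extended to this setting by Martinez \cite{Martinez}, the indecomposable bimodules correspond bijectively to connected finite intervals of the Gabriel quiver. For each starting simple $V(n)^\sigma$ with $\sigma \in\{s, a\}$ and each length parameter $k$ (with the range $k\leq\lfloor n/2\rfloor+1$ ensuring that all weights in the interval are non-negative), reading off the sub and quotient of the corresponding interval gives precisely the displayed exact sequences, with the case $\sigma=s$ producing $M_1$ and $\sigma=a$ producing $M_2$.

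The main technical obstacle is the $\mathrm{Ext}^1$ calculation of the second step, specifically the coboundary bookkeeping required to cancel the $V(n)$-summand so that only the $V(n\pm 2)$-summand cocycles survive as genuine extension classes. Once the Gabriel quiver is pinned down, the classification by intervals is a direct application of the cited quiver theory, leaving an explicit construction of the bimodules realizing each length-$k$ interval as the remaining task --- which is the content of Theorem \ref{main_thm}.
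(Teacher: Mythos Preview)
The paper does not give its own proof of Theorem~\ref{LP_Thm}: it is quoted as a result of Loday--Pirashvili \cite{LP_Leib_rep}, and the paragraph immediately preceding the statement summarizes their method (compute the Gabriel quiver via the Clebsch--Gordan decomposition, then invoke the classification of indecomposables from \cite{Gabriel} and \cite{Martinez}). Your proposal is precisely a fleshed-out version of that cited argument, so there is nothing to compare---you have reconstructed the original proof that the paper is importing rather than reproving.

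One small point worth tightening: your sentence ``an analogous argument handles $\mathrm{Ext}^1(V(n)^a, V(m)^s)$'' is ambiguous. In fact this Ext group vanishes (the extra constraint coming from identity~(\ref{rholambda1}) forces the cocycle to be zero when the sub is symmetric and the quotient antisymmetric), which is why both $M_1$ and $M_2$ in the statement have antisymmetric sub and symmetric quotient; the quiver arrows all point in one direction. Making this explicit would sharpen the description of the Gabriel quiver in your third step.
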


Our goal is to build these extensions explicitly. Let $M$ be a finite-dimensional Leibniz bimodule of $\mathfrak{sl}_2$. As a right module, by Theorem \ref{completelyreducible} it is completely reduces into a direct sum of simple $\mathfrak{sl}_2$-modules $V_1\oplus\dots \oplus V_k$, the right action on each simple submodule being described by Theorem \ref{irreducible}. Hence, the study is reduced to the left action only. In the case $k=1$ it is  $V(d)^s$ and $V(d)^a$, i.e. simple objects in the category of Leibniz representation of $\mathfrak{sl}_2$. In \cite[Theorem 3.1]{arxiv} the case $k=2$ is exploited:

\begin{theorem} An $\mathfrak{sl}_2$-module $M=V(n)\oplus V(m)$ is indecomposable as a Leibniz $\mathfrak{sl}_2$-bimodule if and only if $m=n-2$. For any integer $n\geq 2$, up to $\mathfrak{sl}_2$-bimodule isomorphism there are exactly two indecomposable bimodules. The non-zero brackets of the left action are either
\begin{align*}
&\bl h,v_i \br =-(n-2i)v_i -2iw_{i-1}  &&\bl h,w_{j}\br=2(m-j+1)v_{j+1}-(m-2j)w_{j}\\
&\bl f,v_i\br=-v_{i+1}+w_i & \text{or}\hspace*{0.8cm} & \bl f,w_{j}\br=v_{j+2}-w_{j+1}\\
&\bl e,v_{i}\br=i(n-i+1)v_{i-1}+i(i-1)w_{i-2}&& \bl e,w_{j}\br=(m-j+1)((m-j+2)v_{j} +iw_{j-1})
\end{align*}
corresponding to two bimodules, where $\{v_0,\dots,v_n\}$ and $\{w_0,\dots, w_{n-2}\}$ are bases of $V(n)$ and $V(n-2)$ of the Theorem \ref{irreducible}.
\end{theorem}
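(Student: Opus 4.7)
The plan is to reduce the classification to computing a ``cocycle'' $\alpha\colon V_s\to V_a$, where $V_a\subseteq M$ is the maximal antisymmetric subbimodule and $V_s=M/V_a$ the symmetric quotient. First I would set $M_a=\bigcap_{x\in\mathfrak{sl}_2}\ker\lambda_x$; by identity (\ref{rholambda1}) it is a sub-right-module on which $\lambda$ vanishes, so $M/M_a$ carries an induced symmetric structure. Weyl's theorem applied to $M=V(n)\oplus V(m)$ restricts $M_a$ (when $n\ne m$) to $\{0,V(n),V(m),M\}$, and the extremes give the manifestly decomposable bimodules $V(n)^s\oplus V(m)^s$ and $V(n)^a\oplus V(m)^a$. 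Hence for indecomposability we must have $\{M_a,\,M/M_a\}=\{V(n),V(m)\}$ as bimodules.

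Next I would fix $M_a=V(n)$ (the opposite case is parallel) and write $\lambda_x(w)=-\rho_x(w)+\alpha_x(w)$ for $w\in V(m)$ and some linear $\alpha_x\colon V(m)\to V(n)$. Inserting this form into (\ref{rho})--(\ref{blueblue}) reduces the axioms to conditions on $\alpha$ alone: the weight relations $[\rho_h,\alpha_e]=2\alpha_e$, $[\rho_h,\alpha_f]=-2\alpha_f$, and $\alpha_h=[\rho_f,\alpha_e]$, together with $[\rho_e,\alpha_e]=0$, $[\rho_f,\alpha_f]=0$, and the compatibility $\rho_e\alpha_f-\alpha_f\rho_e=\rho_f\alpha_e-\alpha_e\rho_f$. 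The first line forces $n-m$ to be even and gives $\alpha_e(w_j)=a_j\,v_{j+p-1}$, $\alpha_f(w_j)=b_j\,v_{j+p+1}$, where $p:=(n-m)/2$.

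The core computation then proceeds from $[\rho_e,\alpha_e]=0$, which in the bases of Theorem~\ref{irreducible} yields the recursion
\[
a_j(j+p-1)\bigl(\tfrac{n+m}{2}-j+2\bigr) \;=\; j(m+1-j)\,a_{j-1}.
\]
At $j=0$ this reads $a_0(p-1)(\tfrac{n+m}{2}+2)=0$, which forces $a_0=0$ whenever $p\ne 1$; iteration then gives $\alpha_e\equiv 0$, and parallel reasoning for $\alpha_f$ yields $\alpha\equiv 0$, so the extension splits. By symmetry $M_a=V(m)$ leads to the requirement $p=-1$. When $|n-m|$ is odd the parity of weights rules out $\alpha$ entirely. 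When $n=m$ (i.e.\ $p=0$) one instead finds $\alpha_x=b\,\rho_x$ as the unique solution family, but Schur's lemma supplies an $\mathfrak{sl}_2$-equivariant $\phi=b\,\id\colon V(n)\to V(n)$ whose associated bimodule automorphism converts this $\alpha$ to zero, so the extension again splits. Thus only $|n-m|=2$ survives, matching the theorem's condition $m=n-2$.

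Finally, for $m=n-2$ the recursion becomes $a_j(n+1-j)=(n-1-j)a_{j-1}$ with $a_0$ free and unique solution $a_j=\frac{(n-j)(n-j-1)}{n(n-1)}a_0$; compatibility with $\alpha_f$ pins $b=a_0/(n(n-1))$, and $\alpha_h$ is then determined by $\alpha_h=[\rho_f,\alpha_e]$. Since $\mathrm{Hom}_{\mathfrak{sl}_2}(V(m),V(n))=0$ by Schur, there are no nontrivial coboundaries; basis rescaling normalizes $a_0$, producing a unique bimodule up to isomorphism in the $M_a=V(n)$ case, and the parallel analysis with $M_a=V(n-2)$ yields the second. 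Substituting back into $\lambda_x=-\rho_x+\alpha_x$ and collecting terms recovers the two columns of formulas in the statement. The main technical obstacle is bookkeeping of boundary indices---$v_{-1},v_{n+1},w_{-1},w_{m+1}$ all being treated as zero---since precisely these boundary effects are what make $a_0$ free when $p=1$ and force $a_0=0$ otherwise.
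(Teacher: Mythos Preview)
Your approach is genuinely different from the paper's. The paper (quoting \cite{arxiv}) proceeds by direct computation: it writes down the most general left action compatible with identity (\ref{bluered}) alone, producing the parametric formulas of Propositions \ref{l=0}--\ref{l_geq_2} with free scalars $\phi^{ij}$; it then imposes identity (\ref{blue}) on specific triples such as $(f,v_0,f)$, $(f,v_0,h)$, $(f,v_0,e)$ to obtain algebraic relations among the $\phi^{ij}$ (exactly as in equations (\ref{fv1f})--(\ref{fvje}) of the proof of Theorem \ref{main_thm}), and finally normalizes by rescaling basis vectors. Your filtration $M_a\subset M$ together with the cocycle $\alpha_x=\lambda_x+\rho_x$ is more structural and dovetails directly with the extension description of Theorem \ref{LP_Thm}; the paper's method, by contrast, yields the explicit formulas immediately but obscures why $|n-m|=2$ is singled out.

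There is, however, a real gap in your recursion step. The relation
\[
a_j\,(j+p-1)\Bigl(\tfrac{n+m}{2}-j+2\Bigr)=j(m+1-j)\,a_{j-1}
\]
propagates $a_{j-1}=0\Rightarrow a_j=0$ only when $j+p-1\ne 0$; the chain breaks at $j=1-p$. For $p\ge 2$ this index is non-positive and your iteration is valid. But for $p\le 0$ the chain halts at $j=1-p\ge 1$ with $a_{1-p}$ unconstrained. You flag $p=0$ and treat it separately, yet you do not address $p=-1$---and that is precisely the case needed for your ``parallel analysis with $M_a=V(n-2)$'', where the cocycle runs from the larger module into the smaller one and the relevant parameter is $(m-n)/2=-1$. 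So the sentence ``iteration then gives $\alpha_e\equiv 0$'' fails exactly where you invoke it to produce the second bimodule; as written, its existence is asserted but not proved.

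The cleanest repair is to observe that your conditions on $\alpha$ amount to saying that $x\mapsto\alpha_x$ is an $\mathfrak{sl}_2$-module map from the adjoint representation to $\mathrm{Hom}(V(m),V(n))$; by Clebsch--Gordan this Hom-space is one-dimensional precisely when $|n-m|\in\{0,2\}$ and zero otherwise, which disposes of all $p$ uniformly. Alternatively, for $p\le -2$ run the $\alpha_f$-recursion from the bottom boundary $j=-p-2$ (where $v_{j+p+1}=v_{-1}=0$ forces $b_{-p-1}=0$, hence $\alpha_f=0$), then note that $\alpha_e=0\Leftrightarrow\alpha_f=0$ since a nonzero $\alpha_f$ with $[\rho_e,\alpha_f]=0$ would be a highest-weight vector of weight $-2$. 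Either route leaves exactly $p=\pm 1$, one value for each choice of $M_a$, and your normalization argument then finishes correctly.
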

Note that in the first case $M/V(n-2)$ is symmetric and $V(n-2)$ is anti-symmetric, while in the second one $M/V(n)$ is symmetric and $V(n)$ is anti-symmetric bimodules, that is in accordance with Theorem \ref{LP_Thm}.  In the current work, we use results on the left action established in \cite{arxiv} using only equality (\ref{bluered}). Till the rest of the section  let $M$ be an $\mathfrak{sl}_2$-bimodule that decomposes as a right module into the direct sum $M=V(n)\oplus V(m)$. The next statements shed light on the general form of the left action  depending on $n$ and $m$, that satisfies only identity (\ref{bluered}), where $\{v_0,\dots,v_n\}$ and $\{w_0,\dots, w_{m}\}$ are bases of $V(n)$ and $V(m)$ of the Theorem \ref{irreducible}.

\begin{proposition}\label{l=0}\cite[Proposition 2.6]{arxiv} Let $n=m$. Then identity (\ref{bluered}) implies the following:
	$$\begin{array}{lll}
	\bl h,v_i\br =(n-2i)(\psi_{1} v_i+\psi_{2} w_i), & 0\leq i\leq n, & \\ [1mm]
	
	\bl f,v_i\br =\psi_{1} v_{i+1}+\psi_{2} w_{i+1}, & 0\leq i\leq n-1, & \\ [1mm]
	
	\bl e,v_i\br =-i(n-i+1)(\psi_{1} v_{i-1}+\psi_{2} w_{i-1}), & 1\leq i\leq n, \\
	[1mm]
	
	\bl h,w_i\br =(n-2i)(\psi_{3} v_i+\psi_{4} w_i), & 0\leq i\leq n, &\\
	[1mm]
	
	\bl f,w_i\br =\psi_{3} v_{i+1}+\psi_{4} w_{i+1}, & 0\leq i\leq n-1, & \\
	[1mm]
	\bl e,w_i\br =-i(n-i+1)(\psi_{3} v_{i-1}+\psi_{4} w_{i-1}), & 1\leq i\leq n. \\
	\end{array}$$
\end{proposition}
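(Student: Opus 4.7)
My plan is to derive all six formulas from identity (\ref{bluered}) alone by (i) a weight-space analysis forced by taking $y=h$, (ii) an induction along the ladders $v_i=\rl v_{i-1},f\rr$ and $w_i=\rl w_{i-1},f\rr$ obtained by taking $y=f$, and (iii) a single closing identity at $i=n$ that rigidifies the remaining free parameters.

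The weight analysis works as follows. Writing (\ref{bluered}) with $y=h$, $m=v_k$ and using $\rl v_k,h\rr=(n-2k)v_k$ together with the brackets $[e,h]=2e$, $[f,h]=-2f$, $[h,h]=0$ yields
$$\rl\bl e,v_k\br,h\rr=(n-2k+2)\bl e,v_k\br,\quad \rl\bl f,v_k\br,h\rr=(n-2k-2)\bl f,v_k\br,\quad \rl\bl h,v_k\br,h\rr=(n-2k)\bl h,v_k\br.$$
Since the right $h$-eigenspaces of $V(n)\oplus V(n)$ are each spanned by $\{v_j,w_j\}$ with eigenvalue $n-2j$ and vanish outside $[-n,n]$, this forces
$$\bl h,v_i\br=a_iv_i+b_iw_i,\quad \bl f,v_i\br=c_iv_{i+1}+d_iw_{i+1},\quad \bl e,v_i\br=\gamma_iv_{i-1}+\delta_iw_{i-1}$$
(with the convention $v_{-1}=v_{n+1}=0$) and, in particular, $\bl e,v_0\br=0$ because the eigenvalue $n+2$ is out of range. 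Entirely analogous relations hold for $\bl x,w_i\br$. I then \emph{define} the four free scalars by $\bl f,v_0\br=\psi_1v_1+\psi_2w_1$ and $\bl f,w_0\br=\psi_3v_1+\psi_4w_1$.

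Next I propagate the coefficients by applying (\ref{bluered}) with $y=f$, $m=v_{i-1}$, and various $x$, using $[e,f]=h$, $[h,f]=2f$, $[f,f]=0$. With $x=f$ one gets $c_i=c_{i-1}$, $d_i=d_{i-1}$, hence $c_i\equiv\psi_1$ and $d_i\equiv\psi_2$. With $x=h$ the identity gives $a_i=a_{i-1}-2\psi_1$, $b_i=b_{i-1}-2\psi_2$, so $a_i=a_0-2i\psi_1$, $b_i=b_0-2i\psi_2$. With $x=e$ one obtains $\gamma_i=\gamma_{i-1}-a_{i-1}$ and $\delta_i=\delta_{i-1}-b_{i-1}$, whose closed forms starting from $\gamma_0=\delta_0=0$ are
$$\gamma_i=-ia_0+i(i-1)\psi_1,\qquad \delta_i=-ib_0+i(i-1)\psi_2.$$

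Finally I close the system: applying (\ref{bluered}) with $x=e$, $y=f$, $m=v_n$ and $\rl v_n,f\rr=0$ produces $\gamma_n=a_n$ and $\delta_n=b_n$, which after substitution of the closed forms reduce to $(n+1)a_0=n(n+1)\psi_1$ and $(n+1)b_0=n(n+1)\psi_2$, hence $a_0=n\psi_1$ and $b_0=n\psi_2$. Back-substitution then yields exactly $a_i=(n-2i)\psi_1$, $b_i=(n-2i)\psi_2$, $\gamma_i=-i(n-i+1)\psi_1$, $\delta_i=-i(n-i+1)\psi_2$, and the identical argument on the $w$-ladder with parameters $\psi_3,\psi_4$ handles the remaining three equations. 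The main obstacle I anticipate is precisely this final step: the induction along $y=f$ leaves $a_0$ and $b_0$ genuinely free, and only the $i=n$ boundary constraint collapses them to $n\psi_1$ and $n\psi_2$, thereby forcing the clean closed forms in $i$.
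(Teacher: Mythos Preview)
Your argument is correct and complete: the weight analysis via $y=h$, the propagation via $y=f$, and the closing constraint at $i=n$ all go through exactly as you describe, and together they force the claimed formulas. Note that the paper itself does not give a proof of this proposition but simply imports it from \cite[Proposition~2.6]{arxiv}; nonetheless, your method is precisely the technique used in the proofs that \emph{are} given here---the weight reduction in Proposition~\ref{odin koeff} is your step (i) verbatim, and the proof of Proposition~\ref{l_geq_2} pins down the remaining free parameters by evaluating identity~(\ref{bluered}) on specific triples, just as your closing step (iii) does---so there is no doubt your approach coincides with what the cited source does.
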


\begin{proposition}\label{l=1}\cite[Proposition 2.5]{arxiv}
Let  $n=m-2$. Then identity (\ref{bluered}) implies the following:
	$$\begin{array}{llll}
	\bl h,v_i\br =(n-2i)\phi^{11}v_i-2i\phi^{12}w_{i-1}, \ 0 \leq i \leq n,  \\ [1mm]
	\bl f,v_i\br =\phi^{11}v_{i+1}+\phi^{12}w_i, \ 0\leq i \leq n-1, \\ [1mm]
	\bl e,v_i\br =-i(n-i+1)\phi^{11}v_{i-1} + i(i-1)\phi^{12}w_{i-2}, \ 1\leq i \leq n, \\  [1mm]
	\bl h,w_i\br =2(m-i+1)\phi^{21}v_{i+1}+(m-2i)\phi^{22}w_{i}, \ 0 \leq i \leq m,  \\ [1mm]
	\bl f,w_i\br =\phi^{21}v_{i+2}+\phi^{22}w_{i+1}, \ 0\leq i \leq m, \\ [1mm]
	\bl e,w_i\br =(m-i+1)((m-i+2)\phi^{21}v_{i} -i\phi^{22}w_{i-1}), \ 0\leq i \leq m. \\
	\end{array}$$
\end{proposition}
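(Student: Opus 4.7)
The plan is to iterate identity (\ref{bluered}) --- equivalently (\ref{lamb1}): $\lambda_{[x,y]}=\rho_y\lambda_x-\lambda_x\rho_y$ --- for each pair $(x,y)\in\{h,e,f\}^2$, extracting the shape of $\lambda_h,\lambda_e,\lambda_f$ stepwise. Taking $x=y=h$ first yields $[\rho_h,\lambda_h]=0$, so $\lambda_h$ preserves the $\rho_h$-weight decomposition of $M=V(n)\oplus V(m)$. Under the hypothesis $|n-m|=2$ each weight of $\rho_h$ on $M$ is at most two-fold: each $v_i$ has a unique weight partner among the $w_j$'s, forcing $\lambda_h(v_i)$ to lie in a span of two basis vectors (and similarly for $\lambda_h(w_j)$). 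The pairs $(x,y)=(e,h)$ and $(f,h)$ give $[\rho_h,\lambda_e]=2\lambda_e$ and $[\rho_h,\lambda_f]=-2\lambda_f$, so $\lambda_e$ and $\lambda_f$ shift the $\rho_h$-weight by $\pm2$, again restricting the support of $\lambda_e(v_i),\lambda_f(v_i)$ (and likewise on $w_j$) to two weight-shifted basis elements.

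Second, $x=y=f$ gives $[\rho_f,\lambda_f]=0$: $\lambda_f$ commutes with the index shift $\rho_f$. Iterating $\rho_f$ then propagates $\lambda_f(v_0)$ and $\lambda_f(w_0)$ through the full index range, yielding closed forms in four scalar parameters which I label $\phi^{11},\phi^{12},\phi^{21},\phi^{22}$; these match the claimed expressions for $\lambda_f$. The formulas for $\lambda_h$ are then recovered from $(x,y)=(h,f)$: the relation $2\lambda_f=\rho_f\lambda_h-\lambda_h\rho_f$ induces first-order linear recurrences on the coefficients of $\lambda_h(v_i)$ and $\lambda_h(w_j)$ in terms of the $\phi$'s, which are pinned down by the boundary where out-of-range basis vectors vanish (forcing the cross-coefficient to be zero at the endpoint), producing the stated arithmetic progressions $(n-2i)\phi^{11}$, $-2i\phi^{12}$, and the analogous expressions for $\lambda_h(w_j)$. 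Finally, $\lambda_e$ is recovered from $(x,y)=(e,f)$ via $\lambda_h=\rho_f\lambda_e-\lambda_e\rho_f$, with the polynomial prefactors $-i(n+1-i)$ and $i(i-1)$ emerging from matching with the quadratic coefficients of $\rho_e$ in Theorem \ref{irreducible}.

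The main obstacle I expect is endpoint bookkeeping --- tracking basis vectors that vanish outside the index range and verifying that the two commutations $[\rho_e,\lambda_e]=0$ (from $x=y=e$) and $[\rho_f,\lambda_f]=0$ remain simultaneously consistent at both ends, so that no extra boundary obstruction creeps in. Once the structural shape is fixed by weights and the $(f,f)$-commutation, the identification of the exact polynomial factors is a direct, if tedious, verification.
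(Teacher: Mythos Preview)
The paper does not supply a proof here; the proposition is quoted from \cite[Proposition~2.5]{arxiv}. The only in-paper argument to compare against is the short proof of the companion Proposition~\ref{l_geq_2}, which starts from the same kind of support restriction and then evaluates identity~(\ref{bluered}) on the specific triples $(f,v_0,e)$, $(f,w_0,e)$, $(h,v_i,e)$, $(h,w_i,e)$ to fix the remaining scalars. Your plan is of the same flavour and will go through, with one correction.

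You assert that the relation $2\lambda_f=\rho_f\lambda_h-\lambda_h\rho_f$, together with endpoint vanishing, already ``produces the stated arithmetic progressions $(n-2i)\phi^{11}$, $-2i\phi^{12}$''. Only the second follows at that stage: the boundary $w_{-1}=0$ kills the cross-coefficient of $\lambda_h(v_0)$, whence $-2i\phi^{12}$. The diagonal coefficient, however, satisfies only the recursion $a_i-a_{i+1}=2\phi^{11}$, and no endpoint on the $v$-chain forces $a_0$. The same difficulty occurs for \emph{both} coefficients of $\lambda_h(w_j)$, since there $v_{j+1}$ and $w_j$ exist at every index $0\le j\le m$. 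These constants are eventually fixed by the relations you list afterwards --- the telescoping of $(e,f)$ together with $E_0=0$ pins $a_0$ and $d_0$, and one genuinely needs $[\rho_e,\lambda_e]=0$ (or an equivalent) to get $c_0$ --- but this is determining work, not the ``consistency check'' you describe.

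A cleaner variant, and the one the proof of Proposition~\ref{l_geq_2} suggests the reference uses, is to take $(x,y)=(f,e)$ rather than $(h,f)$: then $-\lambda_h=\rho_e\lambda_f-\lambda_f\rho_e$, and since $\rho_e$ already carries the quadratic factors from Theorem~\ref{irreducible}, evaluating at $v_0$ and $w_0$ (where $\rho_e$ vanishes) reads off $a_0=n\phi^{11}$, $c_0=2(m{+}1)\phi^{21}$, $d_0=m\phi^{22}$ in one line from $\rho_e\lambda_f$ alone. After that, $(h,e)$ recovers $\lambda_e$ without further undetermined constants.
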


\begin{proposition}\label{l_geq_2}
Let $n-m\geq 4$.  Then identity (\ref{bluered}) implies the following:
	$$	\begin{array}{ll}
	\,	 \bl f,v_i\br =\phi^{11} v_{i+1}   & 0\leq i \leq n-1\\
	\,	 \bl f,w_j\br =\phi^{22} w_{j+1}           & 0\leq j \leq m-1\\		
	\,	 \bl e,v_i\br =-i(n-i+1)\phi^{11}v_{i-1} &         0\leq i \leq n    \\
	\,	 \bl e,w_j\br =-j(m-j+1)\phi^{22}w_{j-1} &       0\leq j \leq m             \\
	\,	 \bl h,v_i\br =(n-2i)\phi^{11}v_i & 0\leq i \leq n \\
	\,	 \bl h,w_i\br =(m-2i)\phi^{22}w_i & 0\leq i \leq m
	\end{array}.$$	
\end{proposition}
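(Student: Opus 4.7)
The plan is to specialize identity (\ref{bluered}) to concrete choices of $x, y\in\{e, f, h\}$ and $m\in\{v_i, w_j\}$, in the spirit of the proofs of Propositions \ref{l=0} and \ref{l=1}. First I would take $y=h$: since $v_i$ and $w_j$ are $\rho_h$-eigenvectors and $[x, h]$ is a scalar multiple of $x$ for each $x\in\{e, f, h\}$, equation (\ref{bluered}) forces $\bl x, v_i\br$ and $\bl x, w_j\br$ to each lie in a single $\rho_h$-weight space of $M$. When $n-m$ is odd the weights appearing in $V(n)$ and $V(m)$ differ by odd integers and all mixing coefficients vanish automatically, so I may assume $n-m=2k$ with $k\geq 2$. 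The weight analysis then yields the ansatz
\begin{align*}
\bl h, v_i\br &= a_i v_i + c_i w_{i-k}, & \bl f, v_i\br &= \alpha_i v_{i+1} + \beta_i w_{i+1-k}, & \bl e, v_i\br &= \gamma_i v_{i-1} + \delta_i w_{i-1-k},\\
\bl h, w_j\br &= a'_j v_{j+k} + c'_j w_j, & \bl f, w_j\br &= \alpha'_j v_{j+1+k} + \beta'_j w_{j+1}, & \bl e, w_j\br &= \gamma'_j v_{j-1+k} + \delta'_j w_{j-1},
\end{align*}
with the convention that coefficients attached to out-of-range basis vectors are zero.

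Next I would exploit $[f, f]=0$: identity (\ref{bluered}) gives $\bl f, v_{i+1}\br=\rl\bl f, v_i\br, f\rr$ and its $w$-counterpart, and matching coefficients forces $\alpha_i, \beta_i, \alpha'_j, \beta'_j$ to be constants $\alpha, \beta, \alpha', \beta'$ wherever they are defined. The decisive step is to show that $\beta=\alpha'=0$. For $\beta$, I would evaluate the $v$-recursion at $i=k-2$: the $w$-part of $\bl f, v_{k-2}\br$ is attached to $w_{-1}$ and therefore absent, so the right-hand side contains no $w_0$-term, while the left-hand side $\bl f, v_{k-1}\br$ has $w_0$-coefficient $\beta_{k-1}=\beta$; hence $\beta=0$. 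For $\alpha'$, I would apply (\ref{bluered}) with $x=y=f$ and $m=w_m$: the left-hand side vanishes because $\rl w_m, f\rr=0$, while the right-hand side equals $\rl\alpha' v_{m+1+k}, f\rr=\alpha' v_{m+2+k}$; the hypothesis $k\geq 2$ is exactly what guarantees $m+2+k\leq n$, so the basis vector $v_{m+2+k}$ is nonzero and $\alpha'=0$ follows.

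Having disposed of the off-diagonal terms, I would apply (\ref{bluered}) with $x=h, y=f$ to get the recursions $a_{i+1}=a_i-2\alpha$ and $c_{i+1}=c_i$, and with $x=e, y=f$ to get $\gamma_{i+1}=\gamma_i-a_i$ and $\delta_{i+1}=\delta_i$. The boundary $c_0=0$ (since $w_{-k}$ is out of range for $k\geq 2$) forces $c_i=0$ throughout, and hence $\delta_i=0$; a further application of (\ref{bluered}) with $x=h, y=e$ pins down $a_0=n\alpha$, giving $a_i=(n-2i)\alpha$ and $\gamma_i=-i(n-i+1)\alpha$. A parallel treatment of the $w$-side --- with (\ref{bluered}) at $x=y=e, m=w_0$ providing the relation $\gamma'_0(k-1)(n-k+2)=0$, hence $\gamma'_0=0$ because $k\geq 2$ --- produces $a'_j=\gamma'_j=0$, $c'_j=(m-2j)\beta'$ and $\delta'_j=-j(m-j+1)\beta'$. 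Setting $\phi^{11}:=\alpha$ and $\phi^{22}:=\beta'$ yields the formulas in the statement.

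The main obstacle is the off-diagonal cancellation: the recursion coming from $[f, f]=0$ propagates the constants $\beta$ and $\alpha'$ unchanged through the valid range, and one must exhibit precise boundary indices at which a basis vector is out of range on one side of the identity but in range on the other. It is the inequality $k\geq 2$ that places those indices where the cancellation has teeth; for $k\in\{0, 1\}$ the corresponding cancellations fail and genuine mixings appear, as seen in Propositions \ref{l=0} and \ref{l=1}.
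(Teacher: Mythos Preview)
Your argument is correct. The paper's own proof is much shorter because it outsources the hard part: it invokes Propositions~2.2--2.4 of \cite{arxiv}, which already show (for $n-m\geq 4$) that identity~(\ref{bluered}) forces the block-diagonal shape
\[
\bl f,v_i\br=\phi^{11}v_{i+1},\qquad \bl e,v_i\br=\tfrac{i(n-i+1)}{n}\epsilon^{11}v_{i-1},\qquad \bl h,v_i\br=(\eta^{11}-2i\phi^{11})v_i
\]
(and analogously on the $w$-side), with all cross terms between $V(n)$ and $V(m)$ already eliminated. The paper then only needs to tie the six constants together, which it does with four quick specializations of (\ref{bluered}) at $(f,v_0,e)$, $(f,w_0,e)$, $(h,v_i,e)$, $(h,w_i,e)$.

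What you do is essentially reprove those cited propositions from scratch: the weight analysis via $y=h$, the constancy of the $f$-coefficients via $[f,f]=0$, and the boundary-index arguments killing $\beta$ and $\alpha'$ are exactly the content of the results the paper imports. Your route is self-contained and makes visible \emph{where} the hypothesis $k\geq 2$ is used (the indices $i=k-2$ and $j=m$ at which one side of the recursion drops out of range while the other does not), something the paper's proof hides behind the citation. The paper's route is terser and avoids rederiving material from \cite{arxiv}. Two small tightenings: your ``$c_0=0$ forces $c_i=0$'' is phrased loosely, since for $k\geq 2$ the coefficients $c_0,\dots,c_{k-1}$ are all attached to out-of-range $w$'s; cleaner is to read off the $w_0$-coefficient of the $(h,f)$-recursion at $i=k-1$ to get $c_k=0$ and then propagate. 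And to conclude $a'_j=0$ you need one relation beyond $\gamma'_0=0$; the true parallel of your $a_0=n\alpha$ step, namely $(x,y)=(h,e)$ at $j=0$, yields $k(n-k+1)a'_0=2\gamma'_0=0$ and hence $a'_0=0$.
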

\begin{proof}
	From \cite[Proposition 2.2, 2.3, 2.4 ]{arxiv} we have the following table of brackets:
	$$	\begin{array}{ll}
	\,	 \bl f,v_i\br =\phi^{11} v_{i+1}   & 0\leq i \leq n-1\\
	\,	 \bl f,w_j\br =\phi^{22} w_{j+1}           & 0\leq j \leq m-1\\		
	\,	 \bl e,v_i\br =\displaystyle\frac{i(n-i+1)}{n}\epsilon^{11}v_{i-1} &         0\leq i \leq n    \\
	\,	 \bl e,w_j\br =\displaystyle\frac{j(m-j+1)}{m}\epsilon^{22}w_{j-1} &       0\leq j \leq m             \\
	\,	 \bl h,v_i\br =(\eta^{11}-2i\phi^{11})v_i & 0\leq i \leq n \\
	\,	 \bl h,w_i\br =(\eta^{22}-2i\phi^{22})w_i & 0\leq i \leq m
	\end{array}.$$	
	Considering identity (\ref{bluered}) for triples $(f,v_0,e)$ and $(f,w_0,e)$ one obtains $\eta^{11}=n\phi^{11}$ and $\eta^{22}=m\phi^{22}$, correspondingly. Analogously, identity (\ref{bluered}) for $(h,v_i,e)$ and $(h,w_i,e)$ implies $\epsilon^{11}=-n\phi^{11}$ and $\epsilon^{22}=-m\phi^{22}$, correspondingly. This completes the proof.
\end{proof}

\section{Main Results}

Throughout this section $M$ is an $\mathfrak{sl}_2$-bimodule that decomposes into the direct sum of simple $\mathfrak{sl}_2$-modules $M=V(n_1)\oplus V(n_2)\oplus \dots \oplus V(n_k)$. Without loss of generality one can assume that $n_1\geq n_2\geq \dots \geq n_k$. By Theorem \ref{irreducible} each simple module $V_p$ ($1\leq p \leq k$) admits basis $\{v_0^p,v_1^p,\dots,v_{n_p}^p\}$ such that for $0\leq i \leq n_p$ the following holds:
$$
\begin{array}{ll}
{\color{red}[}v_i^p, h{\color{red}]} = (n_p-2i)v_i^p,  \\ [1mm]

{\color{red}[}v_i^p,f{\color{red}]} = v_{i+1}^p, \\ [1mm]

{\color{red}[}v_i^p,e{\color{red}]} = -i(n_p+1-i)v_{i-1}^p.
\end{array}
$$

In general $\bl\mathfrak{sl}_2, V_p\br\subseteq M$ and let us set the following for all $1\leq p \leq k$:
$$ \bl h,v_i^p\br =\sum_{q=1}^k \sum_{j=0}^{n_q} \eta_{ij}^{pq} v_j^q, \ \bl f,v_i^p\br =\sum_{q=1}^k \sum_{j=0}^{n_q} \phi_{ij}^{pq} v_j^q, \ \bl e,v_i^p\br =\sum_{q=1}^k \sum_{j=0}^{n_q} \epsilon_{ij}^{pq} v_j^q.
$$

The description of Leibniz bimodules over $\mathfrak{sl}_2$ is reduced to simplify the left action. As the following proposition shows,  most of the coefficients above are annihilated.

\begin{proposition}\label{odin koeff} Set $l_{pq}=\frac12(n_p-n_q)$. Then
	$$ \bl h,v_i^p \br =\displaystyle \sum_{q=1}^k \eta^{pq}_i v^q_{i-l_{pq}}, \
	\bl f,v_i^p\br=\sum_{q=1}^k \phi^{pq}_i v^q_{i+1-l_{pq}}, \
	\bl e,v_i^p\br=\sum_{q=1}^k \epsilon_i^{pq} v^q_{i-1-l_{pq}},$$
	where $\eta_i^{pq}=\phi^{pq}_i=\epsilon^{pq}_i=0$ if  $l_{pq}\notin \mathbb{Z}$.
\end{proposition}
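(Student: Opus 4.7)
The plan is to do a weight-space analysis by applying identity (\ref{bluered}) three times, each with $y=h$ and with $x$ running over $h,f,e$, and then reading off constraints on $j$ from the $h$-eigenvalues.

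Fix $p$, $i$, and recall that $\rl v_i^p,h\br = (n_p-2i)v_i^p$ and $\rl v_j^q,h\br=(n_q-2j)v_j^q$. For each $x\in\{h,f,e\}$, identity (\ref{bluered}) applied to the triple $(x,v_i^p,h)$ gives
$$\bl x,\rl v_i^p,h\rr\br = \rl \bl x,v_i^p\br, h\rr - \bl[x,h],v_i^p\br.$$
Substituting the general expansion of $\bl x,v_i^p\br$ into each side yields, on the left, a scalar multiple $(n_p-2i)$ of $\bl x,v_i^p\br$; on the right, $\rl\cdot,h\rr$ acts as the scalar $(n_q-2j)$ on each summand $v_j^q$, and $[x,h]$ contributes a scalar multiple of $x$ itself (namely $0$, $-2f$, or $2e$).

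Executing the three cases: for $x=h$, since $[h,h]=0$, comparing coefficients of $v_j^q$ gives $(n_p-2i-n_q+2j)\eta_{ij}^{pq}=0$, forcing $\eta_{ij}^{pq}=0$ unless $j=i-l_{pq}$. For $x=f$, since $[f,h]=-2f$, comparing coefficients yields $(n_p-2i-n_q+2j-2)\phi_{ij}^{pq}=0$, hence $\phi_{ij}^{pq}=0$ unless $j=i+1-l_{pq}$. For $x=e$, since $[e,h]=2e$, comparing coefficients yields $(n_p-2i-n_q+2j+2)\epsilon_{ij}^{pq}=0$, hence $\epsilon_{ij}^{pq}=0$ unless $j=i-1-l_{pq}$. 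Setting $\eta_i^{pq}:=\eta^{pq}_{i,i-l_{pq}}$, $\phi_i^{pq}:=\phi^{pq}_{i,i+1-l_{pq}}$, $\epsilon_i^{pq}:=\epsilon^{pq}_{i,i-1-l_{pq}}$ gives the three claimed formulas.

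Finally, the integrality assertion follows automatically: the index $j$ of the surviving basis vector $v_j^q$ must be an integer in $\{0,1,\dots,n_q\}$, and in each of the three cases $j$ differs from $i$ (or $i\pm 1$) by $l_{pq}$. Since $i\in\mathbb{Z}$, a non-integer $l_{pq}$ forces $j\notin\mathbb{Z}$, so no basis element $v_j^q$ can appear, whence all three coefficients vanish. I expect no real obstacle here: the argument is a direct computation in the commuting action of $h$ on both the ambient bimodule $M$ and its right module structure, and the three choices of $x$ are exhaustive because the left action of $\mathfrak{sl}_2$ is determined by its action on the generators $h,f,e$.
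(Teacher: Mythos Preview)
Your argument is correct and follows exactly the same route as the paper: both apply identity~(\ref{bluered}) with $y=h$ to each of $x=h,f,e$, compare $h$-weights on the two sides, and read off the constraint on $j$. Your handling of the signs coming from $[f,h]=-2f$ and $[e,h]=2e$ is in fact cleaner than the printed version, and the integrality remark at the end is the same observation the paper leaves implicit.
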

\begin{proof}
	From $\bl h,{\color{red}[}m,h{\color{red}]}\br=\rl\bl h,m\br,h\rr$ we get
	\begin{multline*}
	(n_p-2i)\sum_{q=1}^k \sum_{j=0}^{n_q} \eta_{ij}^{pq} v_j^q=(n_p-2i)\bl h, v_i^p\br=\bl h,
	\rl v_i^p,h \rr \br \\
	=\rl \bl h,v_i^p \br, h \rr= \rl \sum_{q=1}^k \sum_{j=0}^{n_q} \eta_{ij}^{pq} v_j^q ,h \rr=\sum_{q=1}^k \sum_{j=0}^{n_q} (n_q-2j)\eta_{ij}^{pq} v_j^q.
	\end{multline*}
	Thus $\eta_{ij}^{pq}=0$ unless $j= \frac12(n_q-n_p)+i$. Denote by $\eta^{pq}_i:=\eta_{i, i-l_{pq}}^{pq}$. 
	
	 From $[f,{\color{red}[}m,h{\color{red}]}]= {\color{red}[}[f,m],h{\color{red}]}-2[f, m]$, as above we obtain
$$(n_p-2i)\sum_{q=1}^k \sum_{j=0}^{n_q} \phi_{ij}^{pq} v_j^q=(n_p-2i)[f, v_i^p] =[f,{\color{red}[}v_i^p,h{\color{red}]}] =\sum_{q=1}^k \sum_{j=0}^{n_q} (n_q-2j-2)\phi_{ij}^{pq} v_j^q.$$
	Therefore $\phi_{ij}^{pq}=0$ unless $j= \frac12(n_q-n_p)+i+1$. Denote by $\phi_{i}^{pq}:=\phi_{i,i+1-l_{pq}}^{pq}$ 
	
	From $[e,{\color{red}[}m,h{\color{red}]}]= {\color{red}[}[e,m],h{\color{red}]}-2[e, m]$ we get
$$	(n_p-2i)\sum_{q=1}^k \sum_{j=0}^{n_q} \epsilon_{ij}^{pq} v_j^q=\sum_{q=1}^k \sum_{j=0}^{n_q} (n_q-2j-2)\epsilon_{ij}^{pq} v_j^q.$$
	Hence, $\epsilon_{ij}^{pq}=0$ unless $j= \frac12(n_q-n_p)+i-1$ and denote by $\epsilon_{i}^{pq}:=\epsilon_{i,i-1-l_{pq}}^{pq}$.	
\end{proof}

Next proposition is the main tool in partially reducing the general case to the case $k=2$.

\begin{proposition}\label{projection} For any $x\in \mathfrak{sl}_2$ and $1\leq i \leq j \leq k$, the restriction of the left action $\lambda_x$ on $V(n_i)\oplus V(n_j)$ coincides with the left action described in Propositions \ref{l=0}--\ref{l_geq_2}.
	\end{proposition}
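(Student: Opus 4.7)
The plan is to pass from the general $k$-summand decomposition to the two-summand situation by a projection argument, thereby reducing the claim to Propositions \ref{l=0}--\ref{l_geq_2}. Let $\pi_{ij}\colon M\to V(n_i)\oplus V(n_j)$ denote the projection along $\bigoplus_{p\ne i,j}V(n_p)$. Since the decomposition $M=V(n_1)\oplus\dots\oplus V(n_k)$ is a decomposition as right $\mathfrak{sl}_2$-modules, each summand is $\rho$-invariant, hence $\pi_{ij}$ commutes with $\rho_y$ for every $y\in\mathfrak{sl}_2$.

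Next, I would introduce the candidate left action $\tilde{\lambda}_x:=\pi_{ij}\circ\lambda_x\big|_{V(n_i)\oplus V(n_j)}$ on $V(n_i)\oplus V(n_j)$ and check that it satisfies identity (\ref{bluered}). Applying $\pi_{ij}$ to both sides of (\ref{bluered}) for $m\in V(n_i)\oplus V(n_j)$ and using that $\pi_{ij}$ commutes with the right action yields
$$\tilde{\lambda}_x(\rl m,y\rr)=\rl\tilde{\lambda}_x(m),y\rr-\tilde{\lambda}_{[x,y]}(m),$$
which is exactly identity (\ref{bluered}) for the triple $(V(n_i)\oplus V(n_j),\tilde{\lambda},\rho)$. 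Thus this pair is a two-summand Leibniz bimodule structure satisfying the sole hypothesis that is used in deriving Propositions \ref{l=0}--\ref{l_geq_2}.

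The conclusion then follows case by case on $n_i-n_j$. If $n_i=n_j$, invoke Proposition \ref{l=0}; if $n_i-n_j=2$, invoke Proposition \ref{l=1}; if $n_i-n_j\ge 4$ and even, invoke Proposition \ref{l_geq_2}. When $n_i-n_j$ is odd, Proposition \ref{odin koeff} already forces the cross-coefficients to vanish, so $\tilde{\lambda}$ decouples into independent actions on $V(n_i)$ and $V(n_j)$; this is still of the form described by Proposition \ref{l_geq_2} with all coupling parameters set to zero.

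The only delicate point, as far as I can see, is the equivariance of $\pi_{ij}$ with respect to the right action in the first paragraph; this is immediate once one recalls that the decomposition of $M$ is supplied by Weyl's theorem applied to the right module structure (Theorem \ref{completelyreducible}). Everything else amounts to bookkeeping: matching the nonzero coefficients $\eta^{pq}_s,\phi^{pq}_s,\epsilon^{pq}_s$ of Proposition \ref{odin koeff} with the parameters $\psi_\bullet$ in Proposition \ref{l=0}, the $\phi^{\bullet\bullet}$ in Proposition \ref{l=1}, or the diagonal $\phi^{11},\phi^{22}$ in Proposition \ref{l_geq_2}.
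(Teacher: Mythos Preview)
Your argument is correct and follows essentially the same route as the paper: define the projection $\pi_{ij}$, use $\rho$-invariance of the summands to see that $\pi_{ij}$ commutes with the right action, deduce that $\pi_{ij}\circ\lambda_x$ still satisfies identity~(\ref{bluered}) (the paper phrases this via the operator form~(\ref{lamb1}) rather than applying $\pi_{ij}$ directly to~(\ref{bluered}), but the content is identical), and then invoke the two-summand Propositions~\ref{l=0}--\ref{l_geq_2}. Your explicit case split on $n_i-n_j$, including the odd-difference remark via Proposition~\ref{odin koeff}, is a small elaboration that the paper leaves implicit.
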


\begin{proof} Let $x\in \mathfrak{sl}_2$ and for any $i,j$ from $\{1,\dots, k\}$ let us denote by $\pi_{i,j}$ the linear projection from $M$ to $V(n_i)\oplus V(n_j)$. Consider $m=v_m^1+\dots+v_m^k\in \oplus_{i=1}^k V(n_i)$. Using the fact that $\rho_x(V(n_i))\subseteq V(n_i)$ for all $1\leq i \leq k$ we have  $$\pi_{ij}(\rho_x(m))=\rho_x(v_m^i)+\rho_x(v_m^j)=\rho_x(v_m^i+v_m^j)=\rho_x(\pi_{ij}(m)).$$
This implies that $\pi_{ij}$ and $\rho_x$ commute. Moreover, using equality (\ref{lamb1}) we have $$\pi_{ij}(\lambda_x\circ \rho_y)=\pi_{ij}(\rho_y\circ \lambda_x-\lambda_{[x,y]})=\rho_y\circ(\pi_{ij}\circ\lambda_x)-\pi_{ij}\circ \lambda_{[x,y]}.$$
Denote by $\lambda_x^{ij}:=\pi_{ij}\circ \lambda_x$. Then $\lambda_x^{ij}\rho_y=\rho_y\lambda_{x}^{ij}-\lambda_{[x,y]}^{ij}$ that shows that $\lambda_x^{ij}$ satisfies equation (\ref{bluered}).  However, for fixed $i$ and $j$ linear maps satisfying such condition are studied in Section (2) of \cite{arxiv} and are described in Propositions \ref{l=0}-\ref{l_geq_2}.
\end{proof}

Although it is known from Theorem \ref{LP_Thm} that for a bimodule $M$ to be indecomposable the sequence $n_1\geq n_2\geq \dots \geq n_k$ must decrease by 2, there is a direct proof why $M$ is decomposable if the sequence mentioned is stable.

\begin{proposition}\label{all_equal}
	Let $M=\oplus_{i=1}^k V_i$, where $\dim V_i=n+1$. Then bimodule $M$ is decomposable.
\end{proposition}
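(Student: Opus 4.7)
The plan is to combine Propositions \ref{odin koeff}, \ref{projection} and \ref{l=0} to encode the left action of $\mathfrak{sl}_2$ on $M$ by a single $k\times k$ matrix $A$, then to derive $A^2=-A$ from identity (\ref{rholambda1}), and finally to read off the decomposition from the eigenspace splitting of $A$. Throughout I assume $k\ge 2$ (if $k=1$ the module $V_1$ is simple, hence indecomposable).

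Since every $n_p$ equals the common value $n$, Proposition \ref{odin koeff} gives $l_{pq}=0$ for all $p,q$, so $\bl h,v_i^p\br$ is a combination of the $v_i^q$, $\bl f,v_i^p\br$ of the $v_{i+1}^q$, and $\bl e,v_i^p\br$ of the $v_{i-1}^q$. Applying Proposition \ref{projection} and the explicit formulas of Proposition \ref{l=0} to each pair $(V_p,V_q)$ shows that the four scalars $\psi_1,\dots,\psi_4$ of Proposition \ref{l=0} simultaneously govern $\lambda_h$, $\lambda_f$ and $\lambda_e$ restricted to the pair, and that the ``diagonal'' scalar attached to $V_p$ is independent of the partner $V_q$ (it equals $\eta_i^{pp}/(n-2i)$, a property of $p$ alone). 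Consequently there is a single matrix $A=(\alpha^{pq})$ such that
$$\bl h,v_i^p\br=(n-2i)\sum_q\alpha^{pq}v_i^q,\quad \bl f,v_i^p\br=\sum_q\alpha^{pq}v_{i+1}^q,\quad \bl e,v_i^p\br=-i(n+1-i)\sum_q\alpha^{pq}v_{i-1}^q.$$
Identifying $M$ with $\mathbb{K}^k\otimes V(n)$ via $v_i^p=e_p\otimes v_i$, this reads $\lambda_x(u\otimes v)=(Au)\otimes\rho_x(v)$ for every $x\in\mathfrak{sl}_2$.

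Plugging this into identity (\ref{rholambda1}) with $x=y=h$ yields $\lambda_h(\rho_h+\lambda_h)=0$, i.e. $(n-2i)^2(A+A^2)u\otimes v_i=0$ for all $u,i$. Picking $i=0$ (for $n=0$ the left action already vanishes by Proposition \ref{odin koeff}, so $M=V_1\oplus\cdots\oplus V_k$ is trivially decomposed) forces $A^2+A=0$. The minimal polynomial of $A$ therefore divides $x(x+1)$, so $A$ is diagonalizable and $\mathbb{K}^k=U\oplus W$ with $U=\ker A$ and $W=\ker(A+I)$. Set $M_U=U\otimes V(n)$ and $M_W=W\otimes V(n)$; both are stable under the right action (which acts only on the $V(n)$ factor), and under the left action, since $Au=0$ for $u\in U$ and $Au=-u$ for $u\in W$. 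Hence $M=M_U\oplus M_W$ as subbimodules.

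If both $U$ and $W$ are nonzero this is a nontrivial decomposition. Otherwise either $A=0$ (whence $M$ is antisymmetric and splits as $V_1^a\oplus\cdots\oplus V_k^a$) or $A=-I$ (whence $M$ is symmetric, splitting as $V_1^s\oplus\cdots\oplus V_k^s$); both are nontrivial because $k\ge 2$. The delicate point, and the one that requires the most care, is the matrix reduction in the second paragraph: tracking that the four scalars of Proposition \ref{l=0} control all three of $\lambda_h,\lambda_f,\lambda_e$ on each pair simultaneously, and that the pairwise data glue consistently across $p$ into one global matrix $A$. After that the argument is a clean eigenspace decomposition.
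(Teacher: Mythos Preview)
Your proof is correct and follows essentially the same route as the paper: both encode the left action by a single $k\times k$ matrix (your $A$, the paper's $\Phi$) via Propositions \ref{odin koeff}, \ref{projection}, \ref{l=0}, derive $A^2+A=0$ from identity (\ref{rholambda1}), and then split $M$ along the eigenspaces of the diagonalizable matrix. Your write-up is in fact a bit tidier---the tensor identification $M\cong\mathbb{K}^k\otimes V(n)$ and the explicit treatment of the degenerate cases $A=0$, $A=-I$, $n=0$ make the final decomposition cleaner than the paper's somewhat terse conclusion---though the attribution of the $n=0$ vanishing to Proposition \ref{odin koeff} alone is slightly off (you need the scalar $(n-2i)$ from Proposition \ref{l=0}, or the relation $h=[e,f]$, to kill $\lambda_h$).
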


\begin{proof} By Proposition \ref{odin koeff} for all $1 \leq i \leq n+1, \ 1\leq p \leq k$ we have
	$$ \bl h,v_i^p \br =\sum_{q=1}^k \eta^{pq}_i v^q_{i}, \
	\bl f,v_i^p\br=\sum_{q=1}^k \phi^{pq}_i v^q_{i+1}, \
	\bl e,v_i^p\br=\sum_{q=1}^k \epsilon_i^{pq} v^q_{i-1}.$$
	Furthermore, by Proposition \ref{projection} for $(1\leq s,j \leq k)$ and Proposition \ref{l=0} we get the following for all $1 \leq i \leq n + 1, \ 1 \leq p \leq k$:
	\begin{equation}\label{obshaya}
	\bl h,v_i^p\br=(n-2i)\displaystyle\sum_{q=1}^k \phi^{pq} v^q_i, \ \ \bl f,v_i^p\br=\displaystyle\sum_{q=1}^k \phi^{pq} v^q_{i+1}, \ \
	\bl e,v_i^p\br=-i(n-i+1)\displaystyle\sum_{q=1}^k \phi^{pq}v^q_{i+1}.
	\end{equation}

	In the matrix form, we can write the first equality of (\ref{obshaya}) as follows:
	$$
	\left[
	\begin{array}{c}
	\bl h, v_i^1 \br\\
	\bl h, v_i^2 \br\\
	\vdots \\
	\bl h, v_i^k \br\\
	\end{array}
	\right]^T = (n-2i) \left( \left[
	\begin{array}{ccccc}
	\phi^{11} & \phi^{12} & \ldots & \phi^{1k} \\
	\phi^{21} & \phi^{22} & \ldots & \phi^{2k} \\
	.     &      .    & \ldots & .       \\
	\phi^{k1} & \phi^{k2} & \ldots & \phi^{kk} \\
	\end{array}
	\right] \cdot
	\left[
	\begin{array}{c}
	v_i^1 \\
	v_i^2 \\
	\vdots \\
	v_i^k \\
	\end{array}
	\right] \right)^T= (n-2i) \cdot [v_i^1 \ v_i^2 \ ... \ v_i^k] \Phi^T,
	$$
	where $\Phi=(\phi^{ij})_{1\leq i,j\leq k}$ is a matrix. 	
	Verifying identity (\ref{blue}) for  $h$ and $v_i^p$,  $(1 \leq p \leq k)$ we 
	$$
	\left[
	\begin{array}{cccc}
	1+\phi^{11} & \phi^{12}   & \ldots  & \phi^{1k} \\
	\phi^{21}   & 1+\phi^{22} & \ldots  & \phi^{2k} \\
	.       &      .      & \ldots  &     .  \\
	\phi^{k1}   & \phi^{k2}   & \ldots  & 1+\phi^{kk} \\
	\end{array}
	\right] \cdot \Phi \cdot
	\left[
	\begin{array}{c}
	v_i^1 \\
	v_i^2 \\
	\vdots \\
	v_i^k \\
	\end{array}
	\right] =
	\left[
	\begin{array}{c}
	0 \\
	0 \\
	\vdots \\
	0 \\
	\end{array}
	\right].
	$$
Hence, $(I+\Phi)\Phi = O$ and therefore, $\Phi$ is diagonalizable.  Let $\vec{x}=\sum\limits_{q=1}^k x_q v_i^q \in M \ (0\leq i \leq n)$ be an eigenvector of $\Phi^T$ with an eigenvalue $\lambda$. Then 
	$$\bl h,\vec{x}\br=\bl h, x_1 v_i^1 + x_2 v_i^2 +...+ x_k v_i^k\br=[ \bl h,v_i^1\br \ \bl h,v_i^2\br \ ...  \ \bl h,v_i^k\br]
	\cdot \left[\begin{array}{ccc}
	x_1 \\
	x_2 \\
	\vdots \\
	x_k
	\end{array}
	\right]=
	$$
	$$=n [v_i^1 \ v_i^2 \ ... \ v_i^k]\cdot \Phi^T \cdot \left[
	\begin{array}{c}
	x_1 \\
	x_2 \\
	\vdots \\
	x_k \\
	\end{array}
	\right]=(n-2i) [v_i^1 \ v_i^2 \ ... \ v_i^k] \lambda \left[
	\begin{array}{c}
	x_1 \\
	x_2 \\
	\vdots \\
	x_k \\
	\end{array}
	\right]=(n-2i)\lambda \vec{x}.
	$$	
	Consequently, this implies that $\bl \mathfrak {sl}_2, V_i\br \subseteq V_i$, which means the module $M$ is decomposable.
\end{proof}

The following statement describes all subbimodules of $M$ when all $n_i$'s are different.

\begin{proposition}\label{subbimodule}
Let $N$ be a subbimodule of $M$ and $n_i\neq n_j$ for all $1\leq i \neq j\leq k$. Then $N$ is expressed as
$N=V_{n_{i_1}}\oplus V_{n_{i_2}}\oplus \dots \oplus V_{n_{i_t}}$ for some $1\leq i_1<i_2<\dots <i_t\leq k$.
\end{proposition}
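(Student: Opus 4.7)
The plan is to exploit the fact that a subbimodule is in particular a right $\mathfrak{sl}_2$-submodule, and then apply Weyl's complete reducibility together with Schur's lemma. First, I would observe that since $N$ is a subbimodule, it is closed under the right action, so $N$ is an $\mathfrak{sl}_2$-submodule of $M$ viewed as a Lie algebra module under $\rho$. By Theorem \ref{completelyreducible}, $N$ decomposes as a direct sum $N = \bigoplus_{j=1}^{t} W_j$ of irreducible $\mathfrak{sl}_2$-submodules, and by Theorem \ref{irreducible} each $W_j$ is isomorphic to $V(d_j)$ for some non-negative integer $d_j$.

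The key step is to show that each $W_j$ coincides with one of the given summands $V(n_i)$. For this I would use the canonical projections $\pi_i : M \to V(n_i)$ associated with the decomposition $M = V(n_1) \oplus \dots \oplus V(n_k)$. Since this decomposition respects the right action, each $\pi_i$ is a morphism of $\mathfrak{sl}_2$-modules, hence so is each restriction $\pi_i|_{W_j} : W_j \to V(n_i)$. By Schur's lemma, $\pi_i|_{W_j}$ is either zero or an isomorphism, and in the latter case $d_j = n_i$. Because the $n_i$'s are pairwise distinct by hypothesis, there is at most one index $i_j$ for which $\pi_{i_j}|_{W_j}$ is nonzero; thus $W_j \subseteq V(n_{i_j})$, and comparing dimensions $\dim W_j = d_j + 1 = n_{i_j} + 1 = \dim V(n_{i_j})$ forces the equality $W_j = V(n_{i_j})$.

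Finally, since the $W_j$'s are independent direct summands of $N$ and each sits inside one of the $V(n_i)$'s, the indices $i_j$ that arise must be pairwise distinct; reordering gives $N = V(n_{i_1}) \oplus \dots \oplus V(n_{i_t})$ for some $1 \leq i_1 < i_2 < \dots < i_t \leq k$, as claimed. I do not anticipate any genuine obstacle: the argument is a clean repackaging of Weyl's theorem plus Schur's lemma. The only point worth emphasising is that the distinctness hypothesis on the $n_i$'s is essential — it is precisely what makes each isotypic component of $M$ coincide with a single summand $V(n_i)$, which is the reason the left action plays no role in the argument and why the conclusion fails in the setting of Proposition \ref{all_equal}.
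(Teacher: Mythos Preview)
Your argument is correct and is genuinely different from the paper's. The paper works by hand with an arbitrary element $u\in N$, written in the standard bases of the summands: it repeatedly applies $\rho_f$ to push the leading component of $u$ to its lowest-weight vector, then uses $\rho_h$ (exploiting the distinct eigenvalues coming from $n_i\neq n_j$) to peel off components one at a time, eventually producing a single basis vector $v^{i_t}_t\in N$; applying $\rho_e$ then absorbs all of $V(n_{i_t})$ into $N$, and the process restarts on $u\bmod V(n_{i_t})$. Your route bypasses all of this elementwise bookkeeping by invoking Weyl's theorem on $N$ and then observing, via Schur's lemma applied to the module projections $\pi_i$, that any irreducible summand of $N$ must already coincide with one of the $V(n_i)$. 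The distinctness hypothesis enters in both proofs at the same conceptual point (in the paper, to ensure the $\rho_h$-step does not kill the remaining leading coefficients; for you, to guarantee the isotypic components of $M$ are exactly the $V(n_i)$), but your packaging is cleaner and makes transparent why the left action is irrelevant. The paper's approach, on the other hand, is self-contained at the level of the explicit bases already in play and does not appeal to Schur's lemma.
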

\begin{proof}
Let $N$ be a subbimodule of $M$ and $u=(\alpha_{1}v_{p_1}^{i_1}+\dots)+(\alpha_{2}v_{p_2}^{i_2}+\dots)+\dots+(\alpha_{t}v_{p_t}^{i_t}+\dots) \in N$ with $\alpha_{1}\alpha_{2}\dots \alpha_{t}\neq 0$.
Acting with $f$ from the right $(n_{i_1}-p_1)$-times on $u$ we obtain
\begin{equation}\label{p1}
\alpha_{1}v_{n_{i_1}}^{i_1}+(\alpha_{2}v_{q_2}^{i_2}+\dots)+\dots+(\alpha_{t}v_{q_t}^{i_t}+\dots)\in N
\end{equation}
If all the brackets vanish, then $v_{n_{i_1}}^{i_1}\in N$ and acting from the right with $e$ consecutively, one has $V_{n_{i_1}}\subseteq N$. Therefore, $u \mod V_{n_{i_1}} \in N$ and recursively, the process continues.

If some of the brackets are non-zero, apply $h$ from the right to expression (\ref{p1}) and add it to expression (\ref{p1}) multiplied by  $n_{i_1}$,  to deduce
$$(\alpha_{2}(n_{i_1}+n_{i_2}-2q_2)v_{q_2}^{i_2}+\dots)+\dots+(\alpha_{t}(n_1+n_{i_t}-2q_t)v_{q_t}^{i_t}+\dots)\in N.
$$
Note that due to $n_1>n_2>\dots >n_k$, none of the first coefficients is equal to zero in the brackets that did not vanish in expression (\ref{p1}). Hence, we reduce the number of components to one less and recursively we obtain $v_t^{i_t} \in N$. Applying $e$ from the right continuously one has $V_{i_t} \subseteq N$. Therefore, $u \mod V_{i_p} \in N$ and applying the arguments recursively from the start we are done.
\end{proof}

Note that if $n_i=n_j$ for some $i$ and $j$, the result of Proposition \ref{subbimodule} is not true (cf. there are two subbimodules constructed in Case 1 of the proof of \cite[Proposition 3.1]{arxiv}).

\begin{theorem}\label{main_thm} Let $M$ be an $\mathfrak{sl}_2$-bimodule and as a right $\mathfrak{sl}_2$-module let it decompose as $M=V(n_1)\oplus V(n_2)\oplus\dots\oplus V(n_k)$, where $V(n_i)$ are simple $\mathfrak{sl}_2$-modules of Theorem \ref{irreducible} with base $\{v_0^i,\dots,v_{n_i}^i\}, \ 1\leq i\leq k$ and $n_1\geq n_2\geq \dots \geq n_k$. Then $M$ is an indecomposable Leibniz $\mathfrak{sl}_2$-bimodule only if $n_i-n_{i+1}=2$ for all $1\leq i\leq k-1$. Moreover, up to $\mathfrak{sl}_2$-bimodule isomorphism there are exactly two indecomposable $\mathfrak{sl}_2$-bimodules. The non-zero brackets of the left action is either
	$$\begin{array}{lll}
	& \bl h,v_i^{2p}\br=2(n-2p-i+3)v_{i+1}^{2p-1}-(n-2p-2i+2)v_{i+1}^{2p}-2iv_{i-1}^{2p+1},  \\ [1mm]
	& \bl f,v_i^{2p}\br=v_{i+2}^{2p-1}-v_{i+1}^{2p}+v_i^{2p+1}, \\ [1mm]
	& \bl e,v_i^{2p}\br=(n-2p-i+3)((n-2p-i+4)v_{i}^{2p-1}+iv_{i-1}^{2p})+i(i-1)v_{i-2}^{2p+1},
	\end{array}$$
for all $0\leq p \leq k/2$ or	$$\begin{array}{lll}
	&\bl h,v_i^1\br=-(n-2i)v_{i}^1-2iv_{i-1}^2,      \\ [1mm]
	&\bl f,v_i^1\br=-v_{i+1}^1+v_{i}^2,        \\ [1mm]
	&\bl e,v_i^1\br =i(n-i+1)v_{i-1}^1+i(i-1)v_{i-2}^2, \\ [2mm]
	& \bl h,v_i^{2p+1}\br=(n-4p-i+1)v_{i+1}^{2p}-(n-4p-2i)v_{i+1}^{2p+1}-2iv_{i-1}^{2p+2},  \\ [1mm]
	& \bl f,v_i^{2p+1}\br=v_{i+2}^{2p}-v_{i+1}^{2p+1}+v_i^{2p+2}, \\ [1mm]
	& \bl e,v_i^{2p+1}\br=(n-4p-i+1)((n-4p-i+2)v_{i}^{2p}+iv_{i-1}^{2p+1}+i(i-1)v_{i-2}^{2p+2},
	\end{array}$$
for all $1\leq p \leq  (k-1)/2$, where $n=n_1$. 	
\end{theorem}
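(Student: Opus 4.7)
The plan is to combine the preparatory propositions with identity (\ref{blueblue}), which has not yet been exploited. First, by Proposition \ref{all_equal} an indecomposable $M$ must have all $n_i$ pairwise distinct, so $n_1 > n_2 > \dots > n_k$. Next, I would establish the ``only if'' direction: if some consecutive gap $n_p - n_{p+1}$ exceeds $2$, then for every pair $(i,j)$ with $i \leq p < j$ one has $n_i - n_j \geq 4$, so Proposition \ref{projection} combined with Proposition \ref{l_geq_2} forces the projected left action $\lambda_x$ on $V(n_i)\oplus V(n_j)$ to be diagonal. Consequently no bracket $\bl \mathfrak{sl}_2, V(n_i)\br$ has a component in $V(n_j)$ whenever $i \leq p < j$, and Proposition \ref{subbimodule} then exhibits $V(n_1)\oplus\cdots\oplus V(n_p)$ and $V(n_{p+1})\oplus\cdots\oplus V(n_k)$ as complementary subbimodules, contradicting indecomposability.

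Assuming now $n_i - n_{i+1} = 2$ for all $i$, Propositions \ref{projection} and \ref{l=1} pin down the shape of the left action on each consecutive pair, while Propositions \ref{projection} and \ref{l_geq_2} force the action across non-adjacent pairs to be purely diagonal. This yields, for each $1 \leq p \leq k$, a diagonal scalar $\phi_p$ (which is consistent across both pairs in which $V(n_p)$ participates) and, for each $1 \leq p \leq k-1$, two off-diagonal scalars $a_p, b_p$ encoding the bridges between $V(n_p)$ and $V(n_{p+1})$. Substituting these data into identity (\ref{blueblue}) with $m = v_i^p$ and $x, y \in \{e,f,h\}$ produces polynomial relations among the $\phi$'s, $a$'s and $b$'s: the components landing back inside $V(n_p)$ contract to $\phi_p(\phi_p+1)=0$ modulo cross-term corrections, while the components bridging to $V(n_{p\pm 1})$ couple the product $a_p b_p$ to the difference $\phi_p - \phi_{p+1}$.

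The main obstacle is to disentangle these coupled relations and conclude (a) that $\phi_1, \dots, \phi_k$ strictly alternate between $0$ and $-1$, and (b) that every bridge scalar $a_p, b_p$ is nonzero. For (a) I plan to argue that $\phi_p = \phi_{p+1}$ would force $a_p = b_p = 0$ from the coupling relations derived above, after which Proposition \ref{subbimodule} detaches $V(n_1)\oplus\cdots\oplus V(n_p)$ from $V(n_{p+1})\oplus\cdots\oplus V(n_k)$; hence alternation is forced, and the single remaining discrete invariant is the value of $\phi_1 \in \{0,-1\}$, producing exactly the two announced cases. For (b) the nonvanishing of each $a_p, b_p$ similarly reduces to the two-summand situation already classified in the quoted Theorem 3.1 via Propositions \ref{projection} and \ref{subbimodule}. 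Once alternation and nonvanishing are in hand, the diagonal rescaling $v_i^p \mapsto c_p v_i^p$ multiplies $a_p$ by $c_{p+1}/c_p$ and $b_p$ by $c_p/c_{p+1}$; a recursive choice of the $c_p$ normalizes the bridges to the specific integer values appearing in the theorem. A final direct substitution verifies that each of the two normal forms satisfies all three defining identities (\ref{red}), (\ref{bluered}), (\ref{blueblue}), closing the classification.
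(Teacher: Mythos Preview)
Your overall architecture matches the paper's: reduce to the tridiagonal shape via Propositions \ref{projection}, \ref{l=1}, \ref{l_geq_2}, then impose the remaining identity to pin down the scalars, then rescale. But two steps fail as written.

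First, Proposition \ref{all_equal} does \emph{not} show that the $n_i$ are pairwise distinct; it only treats the case where \emph{all} $n_i$ coincide. If, say, $n_1>n_2=n_3>n_4$ with gaps $2$, nothing in your outline rules this out, and Proposition \ref{subbimodule} (which you use next) explicitly requires all $n_i$ different. The paper sidesteps this entirely by invoking Theorem \ref{LP_Thm} for the full ``only if'' direction; your direct argument for gaps $\geq 4$ is fine, but the equal-dimension subcase needs separate treatment (essentially a block-diagonalization inside each group of equal $n_i$, which is more delicate than Proposition \ref{all_equal} alone).

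Second, and more seriously, your claim (b) that \emph{both} bridge scalars $a_p,b_p$ between $V(n_p)$ and $V(n_{p+1})$ are nonzero is wrong. Applying identity (\ref{blue}) to the triple $(f,v_0^j,e)$ yields $\phi_{j,j-1}\phi_{j-1,j}=0$, i.e.\ $a_pb_p=0$ for every $p$. Exactly one bridge in each adjacent pair survives, and \emph{which} one alternates with the parity of $p$ (this is what produces the two zig-zag patterns in the statement). Your rescaling argument tacitly relies on this: since $a_pb_p$ is invariant under $v_i^p\mapsto c_pv_i^p$, you could not normalize both bridges independently if both were nonzero. The correct picture is that half the bridges vanish, the diagonal scalars $\phi_p$ alternate between $0$ and $-1$ forced by relations like $(1+\phi_{p-1}+\phi_p)\phi_{p,p-1}=0$ and $(1+\phi_p)\phi_{p,p+1}=0$, and then the surviving (nonzero) bridges are rescaled to $1$ recursively. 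Once you repair these two points, your argument coincides with the paper's.
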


\begin{proof} By Theorem \ref{LP_Thm} it is clear that the sequence $\{n_i \mid 1\leq i \leq k\}$ must decrease by two. Let us denote by $n=n_1$ and for the sake of convenience, denote by $V_i=V(n-2i+2)=\{v_0^i,v_1^i, \dots, v_{n-2i+2}^i\}, \ 1\leq i \leq k$. First we use Proposition \ref{projection} for pair $(j,j+1)$ for all $1\leq j \leq k-1$ and Proposition \ref{l=1}, then we use Proposition \ref{projection} for pairs $(j,s), \ 1\leq j\leq k-2, \  j+2\leq s \leq k$ and Proposition \ref{l_geq_2} to obtain the following: 	
$$
	\begin{array}{ll}
	\bl h,v_i^1\br=(n-2i)\phi_{1,1}v_i^1 - 2i\phi_{1,2}v_{i-1}^2, \quad\quad 0\leq i\leq n, \\ [1mm]
	\bl f,v_i^1\br=\phi_{1,1}v_{i+1}^1 + \phi_{1,2}v_i^2,         \quad\quad 0\leq i\leq n-1, \\ [1mm]
	\bl e,v_i^1\br=-i(n-i+1)\phi_{1,1}v_{i-1}^1 + i(i-1)\phi_{1,2}v_{i-2}^2, \quad\quad 1\leq i\leq n, \\ [3mm]
	2\leq j \leq k-1, \ 0\leq i\leq n-2j+2: \\ [1mm]
	\bl h,v_i^j\br=2(n-2j-i+3)\phi_{j,j-1}v_{i+1}^{j-1} + (n-2j-2i+2)\phi_{j,j}v_i^j-2i\phi_{j,j+1}v_{i-1}^{j+1},  \\ [1mm]
	\bl f,v_i^j\br=\phi_{j,j-1}v_{i+2}^{j-1}+\phi_{j,j}v_{i+1}^j+\phi_{j,j+1}v_{i}^{j+1}, \\ [1mm]
	\bl e,v_i^j\br=(n-2j+3-i)((n-2j+4-i)\phi_{j,j-1}v_i^{j-1}-i\phi_{j,j}v_{i-1}^j)+i(i-1)\phi_{j,j+1}v_{i-2}^{j+1},  \\ [3mm]
	0\leq i\leq n-2k+2: \\ [1mm]
	\bl h,v_i^k\br=2(n-2k+3-i)\phi_{k,k-1}v_{i+1}^{k-1}+(n-2k+2-2i)\phi_{k,k}v_i^k, \\ [1mm]
	\bl f,v_i^k\br=\phi_{k,k-1}v_{i+2}^{k-1} + \phi_{k,k}v_{i+1}^k, \\ [1mm]
	\bl e,v_i^k\br=(n-2k-i+3)((n-2k+4-i)\phi_{k,k-1}v_i^{k-1}-i\phi_{k,k}v_{i-1}^k. \\
	\end{array}
	$$
	
	Consider identity (\ref{blue}) for corresponding triples:
	
	$\bullet$ For $(f,v_0^1,f)$ we have
	\begin{equation}
	\left\{
	\begin{array}{ll}
	1) & (1+\phi_{1,1}+\phi_{2,2})\phi_{1,2}=0, \\ [1mm]
	2) & (1+\phi_{1,1})\phi_{1,1}+\phi_{1,2}\phi_{2,1}=0, \\ [1mm]
	3) & \phi_{1,2}\phi_{2,3}=0.
	\end{array}\right.\label{fv1f}
	\end{equation}
	
	\
	
	$\bullet$ For $(f,v_0^1,h)$ we get
	\begin{equation}
	\left\{
	\begin{array}{ll}
	1) & (1+\phi_{1,1})\phi_{1,2}=0, \\ [1mm]
	2) & (1+\phi_{1,1})\phi_{1,1}=0.
	\end{array}\right.\label{fv1h}
	\end{equation}
	
	\
	
	$\bullet$ For $(f,v_0^j,f), \ 2\leq j\leq k-1$ we obtain
	\begin{equation}
	\left\{
	\begin{array}{ll}
	1) & (1+\phi_{j-1,j-1}+\phi_{j,j})\phi_{j,j-1}=0, \\ [1mm]
	2) & (1+\phi_{j,j})\phi_{j,j}+\phi_{j,j-1}\phi_{j-1,j}+\phi_{j,j+1}\phi_{j+1,j}=0, \\ [1mm]
	3) & (1+\phi_{j,j}+\phi_{j+1,j+1})\phi_{j,j+1}=0.
	\end{array}\right.\label{fvjf}
	\end{equation}
	
	\
	
	$\bullet$ For $(f,v_0^j,h), \ 2\leq j \leq k-1$ we have
	\begin{equation}
	(1+\phi_{j,j})\phi_{j,j+1}=0. \label{fvjh}
	\end{equation}
	
	\

	$\bullet$ For $(f,v_0^j,e), \ 2\leq j \leq k-1$ we get
	\begin{equation}
	\phi_{j,j-1}\phi_{j-1,j-2}=\phi_{j,j-1}\phi_{j-1,j-1}=\phi_{j,j-1}\phi_{j-1,j}=0. \label{fvje}
	\end{equation}

	Suppose $k$ is odd and consider the following cases (the case when $k$ is an even is carried out analogously).
	
	\
	
\textbf{Case 1}. Let $\phi_{1,1}=0$. Then by (\ref{fv1h}) we have $\phi_{1,2}=0$, hence $\bl\mathfrak{sl}_2,V_1\br=0$.
	
	If $\phi_{2,1}=\phi_{2,3}=0$, then bimodule $M$ is decomposable. Thus $\phi_{2,1}\neq$ and $\phi_{2,3}\neq0$. Hence, from 1) of (\ref{fvjf})  and (\ref{fvje}) we get $\phi_{2,2}=-1$ and $\phi_{3,2}=0$, respectively. Since $\phi_{2,3}\neq0$, then from equality 3) of (\ref{fvjf}) we have $\phi_{3,3}=0$, hence from (\ref{fvjh}) we obtain $\phi_{3,4}=0$. Thus $\bl\mathfrak{sl}_2,V_3\br=0$.
	
	Let $\phi_{4,3}\neq0, \ \phi_{4,5}\neq0$, otherwise $M$ is decomposable. Then in (\ref{fvjf}) equalities 1) and 3) imply $\phi_{4,4}=-1$ and $\phi_{5,5}=0$. Hence by (\ref{fvjh}) and (\ref{fvje}) we obtain $\phi_{5,6}=0$ and $\phi_{5,4}=0$, correspondingly. This means that $\bl\mathfrak{sl}_2,V_5\br=0$. Continuing this process we will get the following:
	$$
	\begin{array}{ll}
	\bl f,v_i^1\br=0,\\ [1mm]
	\bl f,v_i^2\br=\phi_{2,1}v_{i+2}^1-v_{i+1}^2+\phi_{2,3}v_i^3, \\ [1mm]
	\bl f,v_i^3\br=0,\\ [1mm]
	\bl f,v_i^4\br=\phi_{4,3}v_{i+2}^3-v_{i+1}^4+\phi_{4,5}v_i^5, \\ [1mm]
	\bl f,v_i^5\br=0,\\ [1mm]
	. \quad . \quad . \quad . \quad  . \quad . \quad . \quad . \quad \\ [1mm]
	\bl f,v_i^{2p}\br=\phi_{2p,2p-1}v_{i+2}^{2p-1}-v_{i+1}^{2p}+\phi_{2p,2p+1}v_i^{2p+1}, \\ [1mm]
	\bl f,v_i^{2p+1}\br=0,\\
	\end{array}
	$$
	where $6\leq p \leq \frac{k-1}{2}$.
	Make a basis change $(v_i^1)'=v_i^1, \ (v_i^2)'=\displaystyle\frac{1}{\phi_{2,1}}v_i^2, \ (v_i^3)'=\displaystyle\frac{\phi_{2,3}}{\phi_{2,1}}v_i^3,$ $(v_i^4)'=\displaystyle\frac{\phi_{2,3}}{\phi_{2,1}\phi_{4,3}}v_i^4, \ (v_i^5)'=\displaystyle\frac{\phi_{2,3}\phi_{4,5}}{\phi_{2,1}\phi_{4,3}}v_i^5, \ (v_i^6)'=\displaystyle\frac{\phi_{2,3}\phi_{4,5}}{\phi_{2,1}\phi_{4,3}\phi_{6,5}}v_i^6, \dots , (v_i^{2p})'=\displaystyle\frac{\phi_{2,3}\phi_{4,5}\dots\phi_{2p-2,2p-1}}{\phi_{2,1}\phi_{4,3}\dots\phi_{2p-2,2p-3}\phi_{2p,2p-1}}v_i^{2p}$, \ $(v_i^{2p+1})'=\displaystyle\frac{\phi_{2,3}\phi_{4,5}\dots\phi_{2p,2p+1}}{\phi_{2,1}\phi_{4,3}\dots\phi_{2p,2p-1}}v_i^{2p+1}$ to obtain the following
	$$
	\begin{array}{ll}
	\bl f,v_i^1\br=0,\\ [1mm]
	\bl f,v_i^2\br=v_{i+2}^1-v_{i+1}^2+v_i^3, \\ [1mm]
	\bl f,v_i^3\br=0,\\ [1mm]
	. \quad . \quad . \quad . \quad  . \quad . \quad . \quad . \quad \\ [1mm]
	\bl f,v_i^{2p}\br=v_{i+2}^{2p-1}-v_{i+1}^{2p}+v_i^{2p+1}, \\ [1mm]
	\bl f,v_i^{2p+1}\br=0.\\
	\end{array}
	$$
	Thus for all $1\leq p \leq \displaystyle\frac{k-1}{2}$ we obtain
	$$
	\begin{array}{ll}
	\bl h,v_i^{2p}\br=2(n-2p-i+3)v_{i+1}^{2p-1}-(n-2p-2i+2)v_{i+1}^{2p}-2iv_{i-1}^{2p+1}, \quad 0\leq i \leq n-4p+2, \\ [1mm]
	\bl f,v_i^{2p}\br=v_{i+2}^{2p-1}-v_{i+1}^{2p}+v_i^{2p+1}, \quad 0\leq i \leq n-4p+2, \\ [1mm]
	\bl e,v_i^{2p}\br=(n-2p-i+3)((n-2p-i+4)v_{i}^{2p-1}+iv_{i-1}^{2p})+i(i-1)v_{i-2}^{2p+1}, \quad 0\leq i \leq n-4p+2, \\ [3mm]
	\bl h,v_i^{2p+1}\br=0, \quad 0\leq i \leq n-4p, \\ [1mm]
	\bl f,v_i^{2p+1}\br=0, \quad 0\leq i \leq n-4p, \\ [1mm]
	\bl e,v_i^{2p+1}\br=0, \quad 0\leq i \leq n-4p.
	\end{array}
	$$
	
	Using Proposition \ref{subbimodule} it is easy to see that $M$ is indecomposable.
	
	\
	
	\textbf{Case 2}. Let $\phi_{1,1}\neq0$. Then by (\ref{fv1h}) we have $\phi_{1,1}=-1$. Hence in (\ref{fvje}) we get $\phi_{2,1}=0$. If $\phi_{1,2}=0$, then bimodule $M$ is decomposable. So we may assume that  $\phi_{1,2}\neq0$. Then by equations 1) and 3) of  (\ref{fv1f}) one has  $\phi_{2,2}=0$ and $\phi_{2,3}=0$. Hence $\bl \mathfrak{sl}_2,V_2\br=0$. Continuing a similar reasoning as in the Case 1, we obtain for all $1\leq p \leq \displaystyle\frac{k-1}{2}$ the following:
	$$
	\begin{array}{llll}
	\bl h,v_i^1\br=-(n-2i)v_{i}^1-2iv_{i-1}^2, \quad 0\leq i\leq n,         \\ [1mm]
	\bl f,v_i^1\br=-v_{i+1}^1+v_{i}^2, \quad 0\leq i\leq n,                 \\ [1mm]
	\bl e,v_i^1\br=i(n-i+1)v_{i-1}^1+i(i-1)v_{i-2}^2, \quad 0\leq i\leq n,  \\ [3mm]
	\bl h,v_i^{2p}\br=0, \quad 0\leq i \leq n-4p+2, \\ [1mm]
	\bl f,v_i^{2p}\br=0, \quad 0\leq i \leq n-4p+2, \\ [1mm]
	\bl e,v_i^{2p}\br=0, \quad 0\leq i \leq n-4p+2, \\ [3mm]
	\bl h,v_i^{2p+1}\br=(n-4p-i+1)v_{i+1}^{2p}-(n-4p-2i)v_{i+1}^{2p+1}-2iv_{i-1}^{2p+2}, \quad 0\leq i\leq n-4p, \\ [1mm]
	\bl f,v_i^{2p+1}\br=v_{i+2}^{2p}-v_{i+1}^{2p+1}+v_i^{2p+2}, \quad 0\leq i\leq n-4p, \\ [1mm]
	\bl e,v_i^{2p+1}\br=(n-4p-i+1)((n-4p-i+2)v_{i}^{2p}+iv_{i-1}^{2p+1}+i(i-1)v_{i-2}^{2p+2}, \quad 0\leq i\leq n-4p, \\ [1mm]
	\end{array}
	$$

Once again, indecomposability is proved using Proposition \ref{subbimodule}. 	
\end{proof}

%%%%%%%%%%%%%%%%%%%%%%%%%%%%%%%%%%%

\end{document}